\newtheorem{theorem}{Theorem}[section]
\newtheorem{lemma}[theorem]{Lemma}
\newtheorem{remark}[theorem]{Remark}
\newtheorem{example}[theorem]{Example}
\newtheorem{proposition}[theorem]{Proposition}
\newcommand{\wdin}{\upsubscr{d}{in}{w}}
\newcommand{\wdout}{\upsubscr{d}{out}{w}}
\renewcommand{\SS}{\mathcal{S}_{\text{agree}}}
\newcommand{\WW}{\mathcal{W}}
\newcommand{\realpart}{\mathrm{Re}}
\newcommand{\impart}{\mathrm{Im}}
\newcommand{\real}{{\mathbb{R}}}
\newcommand{\realpositive}{{\mathbb{R}}_{>0}}
\newcommand{\realnonnegative}{{\mathbb{R}}_{\ge 0}}
\newcommand{\until}[1]{\{1,\dots,#1\}}
\newcommand{\map}[3]{#1:#2 \rightarrow #3}
\newcommand{\setmap}[3]{#1:#2 \rightrightarrows #3}
\newcommand{\equilibria}[1]{\operatorname{Eq}(#1)}
\newcommand{\Alin}{A_{\mathrm{linear}}}
\renewcommand{\Alin}{A}
\newcommand{\Bgraph}{\mathcal{G}}
\newcommand{\Lie}{\mathcal{L}}
\newcommand{\SetLie}{\widetilde{\mathcal{L}}}
\newcommand{\gradient}{\nabla}
\newcommand{\Bconvexhull}[1]{\mathrm{co}\{ #1\}}
\newcommand{\ones}{\mathbf{1}}
\newcommand{\zeros}{\mathbf{0}}
\newcommand{\setdef}[2]{\{#1 \; | \; #2\}}
\newcommand{\TwoNorm}[1]{\|#1\|_2}
\newcommand{\Lyap}{V}
\newcommand{\optdyn}{\subscr{\Psi}{dis-opt}}
\newcommand{\modoptdyn}{\subscr{\Psi}{$\alpha$-dis-opt}}
\newcommand{\Blap}{\mathbf{L}}
\newcommand{\fLift}{\tilde{f}}
\newcommand{\gLift}{\tilde{g}}
\newcommand{\xnet}{\bm{x}}
\newcommand{\ynet}{\bm{y}}
\newcommand{\znet}{\bm{z}}
\newcommand{\identity}[1]{\mathsf{I}_{#1}}
\newcommand{\setX}{\mathsf{X}}
\DeclareMathAlphabet{\mathpzc}{OT1}{pzc}{m}{it}
\newcommand{\partialvector}{\partial}
\newcommand{\Dout}{\subscr{\mathsf{D}}{out}}
\newcommand{\Adj}{\mathsf{A}}
\newcommand{\Lap}{\mathsf{L}}
\newcommand{\vertices}{\mathcal{V}}
\newcommand{\edges}{\mathcal{E}}
\newcommand{\smallnonzeroeig}{\Lambda_{*}}
\newcommand\subscr[2]{#1_{\textup{#2}}}
\newcommand\upsubscr[3]{#1_{\textup{#2}}^{\textup{#3}}}
\newcommand{\oprocendsymbol}{\hbox{$\bullet$}}
\newcommand{\oprocend}{\relax\ifmmode\else\unskip\hfill\fi\oprocendsymbol}
\newcommand{\longthmtitle}[1]{\mbox{}\textup{\textsl{(#1):}}}
\newcommand{\myclearpage}{\clearpage}
\renewcommand{\myclearpage}{}
\begin{document}

\title{Distributed continuous-time convex optimization on
  weight-balanced digraphs}
  
\author{Bahman Gharesifard \qquad Jorge Cort\'{e}s\thanks{Bahman
    Gharesifard and Jorge~Cort\'{e}s are with the Department of
    Mechanical and Aerospace Engineering, University of California,
    San Diego, \texttt{\{bgharesifard,cortes\}@ucsd.edu}.}}

\maketitle

\begin{abstract}
  This paper studies the continuous-time distributed optimization of a
  sum of convex functions over directed graphs.  Contrary to what is
  known in the consensus literature, where the same dynamics works for
  both undirected and directed scenarios, we show that the
  consensus-based dynamics that solves the continuous-time distributed
  optimization problem for undirected graphs fails to converge when
  transcribed to the directed setting.  This study sets the basis for
  the design of an alternative distributed dynamics which we show is
  guaranteed to converge, on any strongly connected weight-balanced
  digraph, to the set of minimizers of a sum of convex differentiable
  functions with globally Lipschitz gradients.  Our technical approach
  combines notions of invariance and cocoercivity with the positive
  definiteness properties of graph matrices to establish the results.
\end{abstract}

\vspace*{-1ex}
\section{Introduction}\label{section:intro}

Distributed optimization of a sum of convex functions has applications
in a variety of scenarios, including sensor networks, source
localization, and robust estimation, and has been intensively studied
in recent years, see
e.g.~\cite{MR-RN:04,AN-AO:09,PW-MDL:09,AN-AO-PAP:10,BJ-MR-MJ:09,MZ-SM:12,JNT-DPB-MA:86}.
Most of these works build on consensus-based
dynamics~\cite{ROS-JAF-RMM:07,WR-RWB:08,FB-JC-SM:08cor,MM-ME:10} to
design discrete-time algorithms that find the solution of the
optimization problem.
A recent exception are the works~\cite{JW-NE:10, JW-NE:11} that deal
with continuous-time strategies on undirected networks. This paper
furthers contributes to this body of work by studying continuous-time
algorithms for distributed optimization in directed scenarios.

The unidirectional information flow among agents characteristic of
directed networks often leads to significant technical challenges when
establishing convergence and robustness properties of coordination
algorithms.  The results of this paper provide one more example in
support of this assertion for the case of continuous-time
consensus-based distributed optimization.  This is somewhat surprising
given that, for consensus, the same dynamics works for both undirected
connected graphs and strongly connected, weight-balanced directed
graphs, see e.g.,~\cite{ROS-JAF-RMM:07,WR-RWB:08}.


The contributions of this paper are the following.  We first show that
the solutions of the optimization problem of a sum of locally
Lipschitz convex functions over a directed graph (or digraph)
correspond to the saddle points of an aggregate objective function
that depends on the graph topology through its Laplacian. This
function is convex in its first argument and linear in the
second. Moreover, its gradient is distributed when the graph is
undirected. Our second step is then to study the convergence
properties of the saddle-point dynamics and establish its asymptotic
correctness when the original functions are locally Lipschitz (i.e.,
not necessarily differentiable) and convex, extending the results
available in the literature~\cite{JW-NE:11} for continuously
differentiable, strictly convex functions.  Next, we consider the
optimization problem over digraphs. We first provide an example of a
strongly connected, weight-balanced digraph where the distributed
version of the saddle-point dynamics does not converge. This motivates
us to introduce a generalization of the dynamics that incorporates a
design parameter. We show that, when the original functions are
differentiable and convex with globally Lipschitz gradients, the
design parameter can be appropriately chosen so that the resulting
dynamics asymptotically converge to the set of minimizers of the
objective function on any strongly connected and weight-balanced
digraph. Our technical approach combines notions and tools from
set-valued stability analysis, algebraic graph theory, and convex
analysis. 

\myclearpage%
\vspace*{-1ex}
\section{Preliminaries}\label{section:prelim}

We start with notational conventions.  Let $\real$ and
$\realnonnegative$ denote the set of reals and nonnegative reals,
respectively.  We let $ ||\cdot || $ denote the Euclidean norm on $
\real^d$.
We let $ \ones_d=(1,\ldots,1)^T $, $\zeros_d=(0,\ldots,0)^T \in
\mathbb{R}^d$, and $ \identity{d} $ denote the identity matrix in $
\mathbb{R}^{d\times d}$.  For $ A\in \real^{d_1\times d_2} $ and $ B
\in \real^{e_1\times e_2} $,
$ A\otimes B $ is their Kronecker product.  A function
$\map{f}{\setX_1\times \setX_2}{\mathbb{R}} $, with $ \setX_1 \subset
\real^{d_1} $, $ \setX_2\subset \real^{d_2} $ closed and convex, is
\emph{concave-convex} if it is concave in its first argument and
convex in the second one.  A \emph{saddle point} $ (x_1^*,x_2^*) \in
\setX_1\times \setX_2 $ of $ f $ satisfies $ f(x_1,x_2^*)\leq
f(x^*_1,x^*_2)\leq f(x_1^*,x_2)$ for all $ x_1 \in \setX_1 $ and $ x_2
\in \setX_2$. A set-valued map $\setmap{f}{\real^d}{\real^d}$ takes
elements of $\real^d$ to subsets of~$\real^d$.

\vspace*{-1ex}
\subsection{Graph theory}

We present basic notions from algebraic graph
theory~\cite{FB-JC-SM:08cor}.  A \emph{directed graph}, or
\emph{digraph}, is a pair $\Bgraph=(\vertices,\edges)$, where
$\vertices$ is the (finite) vertex set and $ \edges \subseteq
\vertices\times \vertices $ is the edge set.
A digraph is \emph{undirected} if $(v,u) \in \edges$ anytime $(u,v) \in
\edges$. We refer to an undirected digraph as a \emph{graph}.
A path is an ordered sequence of vertices such that any pair of
vertices appearing consecutively is an edge.  A digraph is
\emph{strongly connected} if there is a path between any pair of
distinct vertices. For a graph, this notion is referred to as
\emph{connected}.  A \emph{weighted digraph} is a triplet $
\Bgraph=(\vertices,\edges,\Adj) $, where $ (\vertices,\edges) $ is a
digraph and $ \Adj \in \mathbb{R}^{n\times n}_{\geq0} $ is the
\emph{adjacency matrix}, satisfying $ a_{ij}>0 $ if $ (v_i,v_j)\in
\edges $ and $ a_{ij}=0 $, otherwise.  The weighted out-degree and
in-degree of $v_i$, $i \in \{1,\dots,n\}$, are respectively, $
\wdout(v_i) =\sum_{j=1}^{n}a_{ij} $ and $\wdin(v_i)=\sum_{j=1}^n
a_{ji} $.  The \emph{weighted out-degree matrix} $ \Dout$ is diagonal
with $ (\Dout)_{ii}=\wdout(i) $, for $ i \in \{1,\ldots,n\}$.  The
\emph{Laplacian} matrix is $ \Lap = \Dout -\Adj$. Note that $
\Lap\ones_n=0 $.  If $ \Bgraph $ is strongly connected, then zero is a
simple eigenvalue of $\Lap$.  $\Bgraph$ is undirected if $ \Lap=\Lap^T
$ and \emph{weight-balanced} if $ \wdout(v) =\wdin(v) $, for all $ v
\in \vertices$. The following three notions are equivalent: (i)
$\Bgraph $ is weight-balanced, (ii) $ \ones_n^T \Lap =0 $, and (iii) $
\Lap+\Lap^T $ is positive semidefinite, see
e.g.,~\cite[Theorem~1.37]{FB-JC-SM:08cor}.  If $\Bgraph $ is
weight-balanced and strongly connected, then zero is a simple
eigenvalue of $ \Lap+\Lap^T $.  Any undirected graph is
weight-balanced.

\vspace*{-1ex}
\subsection{Nonsmooth analysis}

We recall some notions from nonsmooth analysis~\cite{FHC:83}.  A
function $ \map{f}{\real^d}{\real} $ is \emph{locally Lipschitz} at $
x \in \real^d $ if there exists a neighborhood $ \mathcal{U} $ of $ x
$ and $ C_x \in \realnonnegative$ such that $ |f(y)-f(z)|\leq C_x
||y-z|| $, for $ y,z \in \mathcal{U} $. $f$ is locally Lipschitz on
$\real^d$ if it is locally Lipschitz at $x$ for all $x \in \real^d$
and \emph{globally Lipschitz} on $ \real^d $ if for all $ y,z \in
\real^d $ there exists $ C \in \realnonnegative $ such that $
|f(y)-f(z)|\leq C ||y-z|| $.  Locally Lipschitz functions are
differentiable almost everywhere.  If $ \Omega_f $ denotes the set of
points where $ f $ fails to be differentiable, the \emph{generalized
  gradient} of $ f $ is
\[
\partial f(x) = \Bconvexhull{\lim_{k \rightarrow \infty} \nabla f(x_k)
  \ | \ x_k \rightarrow x, x_k \notin \Omega_f \cup S},
\]
where $ S $ is any set of measure zero and $\text{co} $ denotes convex
hull.

\begin{lemma}\longthmtitle{Continuity of the generalized gradient
    map}\label{le:gradient_properties}
  Let $ \map{f}{\real^d}{\real} $ be a locally Lipschitz function at $
  x \in \real^d $.  Then the set-valued map $ \setmap{\partial
    f}{\real^d}{\real^d} $ is upper semicontinuous and locally bounded
  at $ x \in \real^d $ and moreover, $ \partial f(x) $ is nonempty,
  compact, and convex.
\end{lemma}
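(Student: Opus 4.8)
The plan is to derive all five assertions from a single structural consequence of the hypothesis. Since $f$ is locally Lipschitz at $x$, fix a neighborhood $\mathcal{U}$ of $x$ and a constant $C_x \in \realnonnegative$ with $|f(y)-f(z)| \le C_x \|y-z\|$ for all $y,z \in \mathcal{U}$. At any point $z \in \mathcal{U}$ where $f$ is differentiable (that is, $z \notin \Omega_f$), the Lipschitz bound forces $\|\nabla f(z)\| \le C_x$; this uniform gradient estimate is the workhorse for everything below. Write $L_y := \{\lim_{k} \nabla f(x_k) : x_k \to y,\ x_k \notin \Omega_f \cup S\}$ for the set of gradient limits at a point $y$, so that $\partial f(y) = \mathrm{co}(L_y)$.

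The soft properties follow quickly. By Rademacher's theorem (recalled above as the a.e.\ differentiability of locally Lipschitz functions) the set $\Omega_f$ has measure zero; since $S$ does too, the complement of $\Omega_f \cup S$ is dense, so sequences $x_k \to x$ with $x_k \notin \Omega_f \cup S$ exist. Along any such sequence $\{\nabla f(x_k)\}$ lies in the closed ball of radius $C_x$, so Bolzano--Weierstrass produces a convergent subsequence; thus $L_x \ne \emptyset$ and $\partial f(x) = \mathrm{co}(L_x)$ is nonempty. Every element of $L_x$ has norm at most $C_x$, a bound preserved by convex combinations, so $\partial f(x)$ is bounded, and it is convex by construction. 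Moreover $L_x$ is closed---a limit of points of $L_x$ is itself realized as a gradient limit along a diagonal sequence---hence compact, and the convex hull of a compact subset of $\real^d$ is compact, so $\partial f(x)$ is compact. For local boundedness, shrink $\mathcal{U}$ to a neighborhood $W$ of $x$ so that the defining sequences for $\partial f(y)$ with $y \in W$ remain in $\mathcal{U}$; the estimate $\|\nabla f(\cdot)\| \le C_x$ then gives $\partial f(y) \subseteq \{\xi : \|\xi\| \le C_x\}$ uniformly for $y \in W$.

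The main obstacle is upper semicontinuity, which I would establish in the equivalent closed-graph form---valid because the map is locally bounded with compact values: if $y_i \to x$, $\xi_i \in \partial f(y_i)$, and $\xi_i \to \xi$, then $\xi \in \partial f(x)$. By Carath\'eodory's theorem write $\xi_i = \sum_{j=1}^{d+1} \lambda_i^j g_i^j$ with $g_i^j \in L_{y_i}$ and $(\lambda_i^1,\dots,\lambda_i^{d+1})$ in the probability simplex. Local boundedness bounds the $g_i^j$ and the simplex is compact, so after passing to a subsequence $\lambda_i^j \to \lambda^j$ and $g_i^j \to g^j$, giving $\xi = \sum_{j=1}^{d+1} \lambda^j g^j$. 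The crux is a diagonalization showing each $g^j \in L_x$: since $g_i^j = \lim_k \nabla f(w^{i,j}_k)$ along $w^{i,j}_k \to y_i$ avoiding $\Omega_f \cup S$, for each $i$ one selects $z^{i,j} := w^{i,j}_{k(i)}$ with $k(i)$ large enough that $\|z^{i,j}-y_i\|$ and $\|\nabla f(z^{i,j}) - g_i^j\|$ are both at most $1/i$; then $z^{i,j} \to x$, $z^{i,j} \notin \Omega_f \cup S$, and $\nabla f(z^{i,j}) \to g^j$, so $g^j \in L_x$. Hence $\xi$ is a convex combination of elements of $\partial f(x)$ and therefore lies in $\partial f(x)$, as required. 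The delicate points I expect to manage here are the two nested limits in the diagonal extraction and the fact that the construction must not depend on the auxiliary null set $S$; the latter is the standard independence property of the Clarke generalized gradient, which I would invoke to ensure $L_x$ is well defined.
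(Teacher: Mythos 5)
The paper does not actually prove this lemma: it is recalled verbatim from Clarke's monograph (the reference \cite{FHC:83}, where it appears as Propositions~2.1.2 and~2.1.5), so there is no in-paper argument to compare against. Your proof is a correct, self-contained reconstruction of the standard argument. The uniform gradient bound $\|\nabla f(z)\|\le C_x$ on differentiability points of $\mathcal{U}$ correctly drives nonemptiness (via Rademacher and Bolzano--Weierstrass), boundedness, and local boundedness; closedness of the limit set $L_x$ by diagonal extraction plus compactness of convex hulls of compact sets in $\real^d$ gives compactness; and your upper-semicontinuity argument via the closed-graph characterization, Carath\'eodory's theorem with $d+1$ terms, and the nested diagonalization is exactly Clarke's proof. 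Two small remarks. First, the equivalence of the closed-graph property with upper semicontinuity does require the local boundedness you established beforehand, so the logical order of your two paragraphs matters and is correct as written. Second, you are right that independence of the null set $S$ is needed for $\partial f$ to be well defined, but for the lemma itself nothing is lost by fixing $S$ throughout, since every assertion is proved for the set $\mathrm{co}(L_x)$ with $S$ fixed; invoking the independence property is appropriate but not load-bearing here. No gaps.
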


For $\map{f}{\real^d \times \real^d}{\real}$ and $z \in \real^d$, we
let $ \partial_x f(x,z) $ denote the generalized gradient of $x
\mapsto f(x,z)$. Similarly, for $x \in \real^d$, we let $ \partial_z
f(x,z) $ denote the generalized gradient of $z \mapsto f(x,z)$.  A
\emph{critical point} $ x \in \real^d $ of $ f $ satisfies $ \zeros
\in \partial f(x) $.  A function $ \map{f}{\real^d}{\real} $ is
\emph{regular} at $ x \in \real $ if for all $ v \in \real^d $ the
right directional derivative of $ f $, in the direction of $ v $,
exists at $ x $ and coincides with the generalized directional
derivative of $ f $ at $ x$ in the direction of~$v$, see~\cite{FHC:83}
for definitions of these notions. A convex and locally Lipschitz
function at $ x $ is regular~\cite[Proposition~2.3.6]{FHC:83}.

\begin{lemma}\longthmtitle{Finite sum of
    locally Lipschitz functions}\label{le:finite_sum_gen}
  Let $ \{f^i\}_{i=1}^n$ be locally Lipschitz at $ x \in \real^d
  $. Then $\partial(\sum_{i=1}^nf^i)(x)\subseteq \sum_{i=1}^n\partial
  f^i(x)$, and equality holds if $ f^i $ is regular for $ i \in
  \{1,\ldots, n\} $ (here, the summation of sets is the set of points
  of the form $ \sum_{i=1}^ng_i $, with $ g_i \in \partial f^i(x) $).
\end{lemma}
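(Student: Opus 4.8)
The plan is to work through the Clarke generalized directional derivative and the duality between compact convex sets and their support functions. Recall that for a locally Lipschitz $\map{f}{\real^d}{\real}$ the generalized directional derivative at $x$ in the direction $v$ is $f^\circ(x;v) = \limsup_{y \to x,\, t \downarrow 0} (f(y+tv) - f(y))/t$, and that (by the results of~\cite{FHC:83}) $\partial f(x)$ is precisely the compact convex set whose support function is $v \mapsto f^\circ(x;v)$; that is, $\max_{\zeta \in \partial f(x)} \langle \zeta, v\rangle = f^\circ(x;v)$ for every $v \in \real^d$. By Lemma~\ref{le:gradient_properties}, each $\partial f^i(x)$ is nonempty, compact, and convex, and hence so is the Minkowski sum $\sum_{i=1}^n \partial f^i(x)$.

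First I would establish the inclusion for $n=2$ and then induct. The key fact is the subadditivity of the $\limsup$, which yields $(f^1 + f^2)^\circ(x;v) \le (f^1)^\circ(x;v) + (f^2)^\circ(x;v)$ for all $v$. Since the support function of a Minkowski sum is the sum of the support functions, the right-hand side is the support function of $\partial f^1(x) + \partial f^2(x)$, while the left-hand side is the support function of $\partial(f^1+f^2)(x)$. Because both sets are compact and convex, the inequality of their support functions in every direction $v$ is equivalent to the set inclusion $\partial(f^1 + f^2)(x) \subseteq \partial f^1(x) + \partial f^2(x)$. A finite sum of locally Lipschitz functions is again locally Lipschitz, so the general inclusion $\partial(\sum_{i=1}^n f^i)(x) \subseteq \sum_{i=1}^n \partial f^i(x)$ follows by induction on $n$.

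For the equality under regularity, I would use that regularity forces the generalized directional derivative to coincide with the ordinary one-sided directional derivative, $f^\circ(x;v) = f'(x;v)$, and that one-sided directional derivatives are additive whenever they exist. Hence for regular $f^1, f^2$ one gets $(f^1 + f^2)'(x;v) = (f^1)'(x;v) + (f^2)'(x;v) = (f^1)^\circ(x;v) + (f^2)^\circ(x;v)$, which turns the subadditivity inequality into an equality of support functions and therefore into the set equality $\partial(f^1+f^2)(x) = \partial f^1(x) + \partial f^2(x)$. I expect the main point requiring care to be closing the induction: one must also verify that $f^1+f^2$ is itself regular at $x$, so that $(f^1+f^2)^\circ(x;v) = (f^1+f^2)'(x;v)$ may be invoked at the next step. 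This amounts to checking that the left and right directional derivatives of the sum exist, agree, and coincide with the generalized one, which follows from the corresponding property of each summand together with the additivity just noted. Once the sum of two regular functions is shown to be regular with additive generalized gradient, the extension to arbitrary $n$ is immediate by induction.
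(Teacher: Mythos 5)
Your proof is correct and is the standard support-function argument from Clarke's book: subadditivity of the $\limsup$ gives $(f^1+f^2)^\circ(x;\cdot)\le (f^1)^\circ(x;\cdot)+(f^2)^\circ(x;\cdot)$, which by duality between compact convex sets and their support functions yields the inclusion, and regularity upgrades this to equality while also propagating to the sum so the induction closes. The paper itself states this lemma without proof as a recalled fact from~\cite{FHC:83}, so your argument is essentially the one implicitly being cited; the only cosmetic point is that the paper's notion of regularity involves only the right directional derivative, so you need not mention a ``left'' directional derivative when verifying regularity of $f^1+f^2$.
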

A locally Lipschitz and convex function $f$ satisfies, for all $x,x'
\in \real^d$ and $ \xi \in \partial f(x) $, the \emph{first-order
  condition} of convexity,
\begin{equation}\label{eq:convex_prop}
  f(x')-f(x) \geq \xi^T (x'-x).
\end{equation}
The notion of cocoercivity~\cite{EGG-NVY:96} plays a key role in our
technical approach later.  For $ \delta \in \realpositive $, a locally
Lipschitz function $f$ is $ \delta$-\emph{cocoercive} if, for all $ x,
x' \in \real^d $ and $ g_x \in \partial f(x) $, $ g_{x'} \in \partial
f(x') $,
\begin{align*}
  (x-x')^T(g_x-g_{x'})\geq \delta (g_x-g_{x'})^T(g_x-g_{x'}).
\end{align*}
The next result~\cite[Lemma~6.7]{EGG-NVY:96} characterizes cocoercive
differentiable convex functions.

\begin{proposition}\longthmtitle{Characterization of 
    cocoercivity}\label{prop:coco_nes_suff}
  Let $ f $ be a differentiable convex function. Then, $\nabla f$ is
  globally Lipschitz with constant $ K \in \realpositive $ iff $ f $
  is $ \frac{1}{K}$-cocoercive.
\end{proposition}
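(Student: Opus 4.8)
The statement is an equivalence, so the plan is to establish the two implications separately. Since $f$ is differentiable, $\partial f(x) = \{\nabla f(x)\}$, and the generalized-gradient form of cocoercivity in the definition reduces to the gradient statement $(x-x')^T(\nabla f(x)-\nabla f(x')) \ge \frac{1}{K}\|\nabla f(x)-\nabla f(x')\|^2$. The reverse direction is routine; the forward direction is the classical Baillon--Haddad theorem and is where the work lies.

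For the easy direction, suppose $f$ is $\frac{1}{K}$-cocoercive. For arbitrary $x,x'$ I would chain the defining inequality with Cauchy--Schwarz:
\[
\tfrac{1}{K}\|\nabla f(x)-\nabla f(x')\|^2 \le (x-x')^T(\nabla f(x)-\nabla f(x')) \le \|x-x'\|\,\|\nabla f(x)-\nabla f(x')\|.
\]
Dividing through by $\|\nabla f(x)-\nabla f(x')\|$ (the vanishing case being trivial) yields $\|\nabla f(x)-\nabla f(x')\| \le K\|x-x'\|$, i.e.\ $\nabla f$ is globally Lipschitz with constant $K$.

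For the forward direction, the main obstacle, suppose $\nabla f$ is $K$-Lipschitz. The key device is to fix a point $z$ and pass to the auxiliary function $g(x) = f(x) - \nabla f(z)^T x$, which is convex and differentiable with $\nabla g(x) = \nabla f(x) - \nabla f(z)$. Then $\nabla g(z)=0$, and convexity makes $z$ a \emph{global} minimizer of $g$. The crucial analytic input is the descent lemma: integrating $\nabla g$ along the segment from $x$ to $y$ and using its $K$-Lipschitzness gives the quadratic upper bound $g(y)\le g(x)+\nabla g(x)^T(y-x)+\frac{K}{2}\|y-x\|^2$. Minimizing the right-hand side over $y$ (at $y=x-\frac{1}{K}\nabla g(x)$) and invoking $g(z)\le g(y)$ produces $g(z)\le g(x)-\frac{1}{2K}\|\nabla g(x)\|^2$. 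Substituting the definitions of $g$ and $\nabla g$ and rearranging gives
\[
\tfrac{1}{2K}\|\nabla f(x)-\nabla f(z)\|^2 \le f(x)-f(z)-\nabla f(z)^T(x-z).
\]

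The final step is a symmetrization: I would rerun the argument with the roles of $x$ and $z$ exchanged to obtain the companion inequality, then add the two. The function-value terms cancel and the linear terms combine into $(\nabla f(x)-\nabla f(z))^T(x-z)$, leaving exactly
\[
(x-z)^T(\nabla f(x)-\nabla f(z)) \ge \tfrac{1}{K}\|\nabla f(x)-\nabla f(z)\|^2,
\]
which is $\frac{1}{K}$-cocoercivity. I expect the descent lemma and the subsequent minimization to be the delicate points that must be made watertight; the symmetrization that cancels the $f$-terms is the trick that sharpens the one-sided bound into the tight quadratic estimate needed to close the equivalence.
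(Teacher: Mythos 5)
Your proof is correct, but there is nothing in the paper to compare it against: the authors state this proposition as a quoted result, citing Lemma~6.7 of Golshtein and Tretyakov, and give no proof of their own (the appendix only proves the complementary fact, Proposition~\ref{prop:coco_diff}, that the differentiability hypothesis cannot be dropped). Your argument is the classical Baillon--Haddad proof and it is watertight as sketched. The reverse implication via Cauchy--Schwarz is routine, as you say. In the forward direction, the shift to $g(x)=f(x)-\nabla f(z)^Tx$ correctly turns $z$ into a global minimizer of a convex function whose gradient is still $K$-Lipschitz (it differs from $\nabla f$ by a constant), the descent lemma follows from the fundamental theorem of calculus applied to $t\mapsto g(x+t(y-x))$ together with $\|\nabla g(u)-\nabla g(x)\|\le K\|u-x\|$, and minimizing the quadratic majorant at $y=x-\frac{1}{K}\nabla g(x)$ gives the one-sided bound $\frac{1}{2K}\|\nabla f(x)-\nabla f(z)\|^2\le f(x)-f(z)-\nabla f(z)^T(x-z)$. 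Your symmetrization then cancels the function values and yields exactly the $\frac{1}{K}$-cocoercivity inequality with the matching constant, which is the point of the equivalence. The only cosmetic remark is that the paper's definition of cocoercivity is phrased for generalized gradients; your opening observation that $\partial f(x)=\{\nabla f(x)\}$ for a differentiable convex (hence regular) function is the right way to reconcile the two statements.
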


\vspace*{-1ex}
\subsection{Set-valued dynamical systems}

Here, we recall some background on set-valued dynamical systems
following~\cite{
  JC:08-csm-yo}.  A continuous-time set-valued dynamical system on
$\setX \subset \mathbb{R}^d$ is a differential inclusion
\begin{equation}\label{eq:diff-inc}
  \dot{x}(t) \in \Psi(x(t))
\end{equation}
where $ t \in \realnonnegative $ and $\setmap{\Psi}{\setX \subset
  \real^d}{\real^d}$ is a set-valued map.  A solution to this
dynamical system is an absolutely continuous curve $ x:
[0,T]\rightarrow \setX $ which satisfies~\eqref{eq:diff-inc} almost
everywhere. The set of equilibria of~\eqref{eq:diff-inc} is denoted by
$\equilibria{\Psi} = \setdef{x \in \setX}{ 0 \in \Psi(x)}$.  

\begin{lemma}\longthmtitle{Existence of solutions}\label{le:solution}
  For $\setmap{\Psi}{\real^d}{\real^d}$ upper semicontinuous with
  nonempty, compact, and convex values, there exists a solution
  to~\eqref{eq:diff-inc} from any initial condition.
\end{lemma}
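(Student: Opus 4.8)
The plan is to construct approximate solutions by an Euler-type polygonal scheme and then extract a convergent subsequence whose limit is shown to solve the inclusion. First I would exploit the hypotheses to obtain a local Lipschitz bound: since $\Psi$ is upper semicontinuous with compact values, it is locally bounded, so there exist $r, M \in \realpositive$ with $\|v\| \le M$ for all $v \in \Psi(y)$ and all $y$ in the closed ball $\bar{B}(x_0,r)$ around the prescribed initial condition $x_0$. Fixing $T = r/M$ then guarantees that any curve starting at $x_0$ with speed bounded by $M$ remains in $\bar{B}(x_0,r)$ on $[0,T]$.

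Next I would build the approximants. For each $k \in \integerspositive$, I partition $[0,T]$ into subintervals of length $T/k$ with nodes $t_j = jT/k$, and define a piecewise-linear curve $x_k$ with $x_k(0) = x_0$ by choosing, on each $[t_j, t_{j+1}]$, some $v_j \in \Psi(x_k(t_j))$ (nonempty by hypothesis) and setting $\dot{x}_k(t) = v_j$ there. By the bound above, each $x_k$ is $M$-Lipschitz and takes values in $\bar{B}(x_0,r)$. By Arzel\`a--Ascoli the family $\{x_k\}$ is relatively compact in $C([0,T];\real^d)$, so a subsequence (not relabeled) converges uniformly to some absolutely continuous $x$ with $x(0) = x_0$; moreover the derivatives $\{\dot{x}_k\}$ are bounded in $L^\infty$, so along a further subsequence $\dot{x}_k \rightharpoonup \dot{x}$ weakly-$\ast$ in $L^\infty([0,T];\real^d)$.

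It remains to verify that $\dot{x}(t) \in \Psi(x(t))$ for almost every $t$, and this is where the structural hypotheses—upper semicontinuity together with convexity and compactness of the values—become indispensable. Fix $\eps \in \realpositive$ and a point $t$ at which $x$ is differentiable. By upper semicontinuity, there is $\eta \in \realpositive$ such that $\Psi(y) \subseteq \Psi(x(t)) + \eps\bar{B}(0,1)$ whenever $\|y - x(t)\| < \eta$. For $k$ large and $s$ in a small interval around $t$, the left node $\tau_k(s)$ of the subinterval containing $s$ satisfies $\|x_k(\tau_k(s)) - x(t)\| < \eta$, combining the $M$-Lipschitz bound on $x_k$ (which controls $\|x_k(\tau_k(s)) - x_k(s)\| \le MT/k$) with uniform convergence $x_k \to x$ and continuity of $x$; hence $\dot{x}_k(s) \in \Psi(x(t)) + \eps\bar{B}(0,1)$. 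Since $\Psi(x(t))$ is convex and compact, this target set is closed and convex, so Mazur's lemma lets me pass from the weak-$\ast$ convergence $\dot{x}_k \rightharpoonup \dot{x}$ to strong convergence of suitable convex combinations, and thus to almost-everywhere convergence along a subsequence, preserving the inclusion: $\dot{x}(t) \in \Psi(x(t)) + \eps\bar{B}(0,1)$. Letting $\eps \downarrow 0$ and using closedness of $\Psi(x(t))$ yields $\dot{x}(t) \in \Psi(x(t))$.

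The main obstacle is precisely this last step: without convexity the weak-$\ast$ limit of the derivatives need not be a pointwise selection of $\Psi$, since weak limits can escape a nonconvex set. Convexity of the values (so that convex combinations of near-selections remain near-selections) together with upper semicontinuity (to localize $\Psi$ around $x(t)$) is what rescues the argument, while compactness of the values supplies the local bound $M$ underpinning the relative compactness of $\{x_k\}$. Finally, the local solution on $[0,T]$ can be continued by restarting the construction from $x(T)$, producing a solution from the given initial condition.
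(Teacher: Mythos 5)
The paper does not actually prove this lemma: it is recalled as standard background on differential inclusions (following the cited tutorial of Cort\'es and, ultimately, Aubin--Cellina/Filippov), so there is no in-paper argument to compare against. Your proposal supplies the classical proof of that standard result, and the overall architecture is sound: local boundedness from upper semicontinuity plus compact values, Euler polygonal approximants confined to $\bar{B}(x_0,r)$ on $[0,T]$ with $T=r/M$, Arzel\`a--Ascoli plus weak-$\ast$ compactness of the derivatives, and a convexity-based passage to the limit in the inclusion. Two technical points in the last step deserve tightening. First, Mazur's lemma as usually stated applies to weakly convergent sequences in a Banach space, not to weak-$\ast$ convergent ones; the standard fix is to observe that on the finite interval $[0,T]$ the bounded sequence $\{\dot{x}_k\}$ also converges weakly in $L^2$, where Mazur applies. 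Second, your localization gives $\dot{x}_k(s)\in\Psi(x(t))+\eps\bar{B}(0,1)$ only for $s$ in a $t$-dependent interval and $k$ large, so the convex-combination argument yields $\dot{x}(s)\in\Psi(x(t))+\eps\bar{B}(0,1)$ for almost every $s$ near $t$, not at $s=t$ itself; one closes this by working at Lebesgue points of $\dot{x}$, where $\dot{x}(t)=\lim_{h\to 0}\frac{1}{h}\int_t^{t+h}\dot{x}(s)\,ds$ lies in the closed convex target as a limit of averages (equivalently, by invoking the Convergence Theorem of Aubin--Cellina). Neither issue reflects a wrong idea---your diagnosis of where convexity, compactness, and upper semicontinuity each enter is exactly right---but both would need to appear in a fully rigorous writeup.
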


The LaSalle Invariance Principle 
is helpful to establish the asymptotic convergence of systems
of the form~\eqref{eq:diff-inc}.  A set $ W \subset \setX $ is
\emph{weakly positively invariant} under~\eqref{eq:diff-inc} if, for
each $x \in W$, there exists at least one solution
of~\eqref{eq:diff-inc} starting from $x$ entirely contained in $W$.
Similarly, $ W $ is \emph{strongly positively invariant}
under~\eqref{eq:diff-inc} if, for each $x \in W$, all solutions
of~\eqref{eq:diff-inc} starting from $x$ are entirely contained in
$W$.  Finally, the \emph{set-valued Lie derivative} of a
differentiable function $ \map{V}{\real^d}{\real} $ with respect to $
\Psi $ at $ x \in \real^d $ is $ \SetLie_{\Psi}{V(x)}=\{v^T \nabla
V(x) \ | \ v \in \Psi(x) \}$.


\begin{theorem}\longthmtitle{Set-valued LaSalle Invariance
    Principle}\label{th:laSalle}
  Let $ W \subset \setX $ be strongly positively invariant
  under~\eqref{eq:diff-inc} and $ \Lyap: \setX \rightarrow \mathbb{R}
  $ a continuously differentiable function.  Suppose the evolutions
  of~\eqref{eq:diff-inc} are bounded and $ \max
  \SetLie_{\Psi}{\Lyap(x)} \leq 0 $ or $
  \SetLie_{\Psi}{\Lyap(x)}=\emptyset $, for all $ x \in W $. Let
  $ S_{\Psi,\Lyap} = \{x\in \setX \ | \ 0 \in
  \SetLie_{\Psi}{\Lyap(x)}\} $.
  Then any solution $ x(t) $, $ t\in \mathbb{R}_{\geq 0} $, starting
  in $W$ converges to the largest weakly positively invariant set $ M
  $ contained in $ \bar{S}_{\Psi,\Lyap}\cap W$. When $ M $ is a finite
  collection of points, then the limit of each solution 
  equals one of them.
\end{theorem}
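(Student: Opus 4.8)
\emph{Proof proposal.} The plan is to carry the classical LaSalle argument into the set-valued setting, with $\Lyap$ serving as a Lyapunov function that is nonincreasing along solutions. First I would fix a solution $x(\cdot)$ starting in $W$; since $W$ is strongly positively invariant, $x(t)\in W$ for all $t\ge 0$. Because $\Lyap$ is continuously differentiable and $x(\cdot)$ is absolutely continuous, $t\mapsto \Lyap(x(t))$ is absolutely continuous, and at almost every $t$ both $\dot x(t)$ and $\tfrac{d}{dt}\Lyap(x(t))$ exist with $\tfrac{d}{dt}\Lyap(x(t))=\dot{x}(t)^T\nabla \Lyap(x(t))$. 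Since $\dot x(t)\in\Psi(x(t))$ a.e., this derivative belongs to $\SetLie_{\Psi}{\Lyap(x(t))}$ at a.e.\ $t$; in particular that set is nonempty at such $t$, so the hypothesis forces $\tfrac{d}{dt}\Lyap(x(t))\le \max\SetLie_{\Psi}{\Lyap(x(t))}\le 0$. Hence $\Lyap(x(\cdot))$ is nonincreasing. Boundedness of the evolution together with continuity of $\Lyap$ bounds $\Lyap$ below along the trajectory, so $\Lyap(x(t))\to c$ for some $c\in\real$.

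Next I would study the $\omega$-limit set $\Omega(x)$ of the (bounded) trajectory, which is nonempty and compact, and to which $x(t)$ converges in the sense that its distance to $\Omega(x)$ tends to zero. The key step, and the main obstacle, is to show $\Omega(x)$ is weakly positively invariant. For $y\in\Omega(x)$ I would pick $t_k\to\infty$ with $x(t_k)\to y$ and consider the time-shifted curves $x_k(s)=x(s+t_k)$, each a solution of~\eqref{eq:diff-inc} with $x_k(0)\to y$. Because $\Psi$ is upper semicontinuous with nonempty, compact, convex values and is locally bounded on the bounded region containing the trajectory, the $x_k$ are equi-Lipschitz on compact intervals; by Arzel\`a--Ascoli a subsequence converges uniformly on compact time intervals to an absolutely continuous curve $\phi$, and the standard closure property of differential inclusions with upper semicontinuous compact-convex-valued right-hand side shows $\phi$ is itself a solution with $\phi(0)=y$. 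Since $s+t_k\to\infty$, one has $\phi(s)\in\Omega(x)$ for all $s$, which gives weak invariance. Continuity of $\Lyap$ and $\Lyap(x(t))\to c$ then yield $\Lyap\equiv c$ on $\Omega(x)$.

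I would then localize $\Omega(x)$ inside $\bar{S}_{\Psi,\Lyap}\cap W$. Along such a solution $\phi$ contained in $\Omega(x)$ the map $s\mapsto \Lyap(\phi(s))=c$ is constant, so $\tfrac{d}{ds}\Lyap(\phi(s))=0$ for a.e.\ $s$; since this derivative lies in $\SetLie_{\Psi}{\Lyap(\phi(s))}$, we get $0\in\SetLie_{\Psi}{\Lyap(\phi(s))}$, i.e.\ $\phi(s)\in S_{\Psi,\Lyap}$, for a.e.\ $s$. Continuity of $\phi$ then places all of $\phi$, and in particular $y=\phi(0)$, in $\bar{S}_{\Psi,\Lyap}$, so $\Omega(x)\subseteq\bar{S}_{\Psi,\Lyap}$. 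Moreover $x(t)\in W$ for all $t$ gives $\Omega(x)\subseteq W$ (the set $W$ being closed). Thus $\Omega(x)$ is a weakly positively invariant subset of $\bar{S}_{\Psi,\Lyap}\cap W$, hence contained in the largest such set $M$; as $x(t)$ converges to $\Omega(x)$, it converges to $M$.

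Finally, for the statement about finite $M$, I would invoke the purely topological fact that the $\omega$-limit set of a bounded continuous curve is connected (being a nested intersection of compact connected sets $\overline{\{x(t):t\ge T\}}$). If $M$ is a finite collection of points it is totally disconnected, so the connected set $\Omega(x)\subseteq M$ must reduce to a single point, and the corresponding solution converges to that point. I expect the weak-invariance step to require the most care, since it is the only place where the regularity of $\Psi$ (upper semicontinuity with nonempty, compact, convex values) and the compactness argument on time shifts are genuinely needed; the monotonicity of $\Lyap$ and the handling of the almost-everywhere identities, by contrast, are routine once the chain rule for $\Lyap\circ x$ is in place.
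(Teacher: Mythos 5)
This theorem is stated in the paper as a recalled background result (from the cited tutorial on discontinuous dynamical systems) and is not proved there, so there is no in-paper proof to compare against; your argument is the standard proof of the set-valued LaSalle principle and is essentially correct, including the two delicate points (handling the possibly empty Lie derivative via the a.e.\ chain rule, and establishing weak invariance of the $\omega$-limit set through time shifts, Arzel\`a--Ascoli, and the closure property of solution sets of upper semicontinuous inclusions with compact convex values). The one place you lean on something not in the hypotheses is the parenthetical ``the set $W$ being closed'' when concluding $\Omega(x)\subseteq W$: the theorem only assumes $W$ strongly positively invariant, so strictly one should either add closedness of $W$ to the hypotheses or replace $W$ by $\bar W$ in the conclusion --- harmless here, since the only $W$ used in the paper is the affine subspace $W_{\znet_0}$, which is closed.
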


\myclearpage

\vspace*{-1ex}
\section{Problem statement and equivalent
  formulations}\label{section:problem-statement}

Consider a network composed by $n$ agents $v_1,\dots, v_n $ whose
communication topology is described by a strongly connected digraph
$\Bgraph$. An edge $(v_i,v_j)$ represents the fact that $v_i$ can
receive information from $v_j$.  For each $i \in \{1,\ldots, n\} $,
let $ \map{f^i}{\mathbb{R}^d}{\mathbb{R}} $ be locally Lipschitz and
convex, and only available to agent $v_i$. The network objective is to
solve
\begin{align}\label{eq:dis_opt}
  \mathrm{minimize} \quad f(x)=\sum_{i=1}^n f^i(x) ,
\end{align}
in a distributed way.  Let $ x^i\in \mathbb{R}^d $ denote the estimate
of agent $ v_i $ about the value of the solution to~\eqref{eq:dis_opt}
and let $\xnet^T = ((x^1)^T, \dots, (x^n)^T) \in \real^{n d}$. Next,
we provide an alternative formulation of~\eqref{eq:dis_opt}.

\begin{lemma}\label{le:equiv-ftilde}
  Let $ \Lap \in \real^{n \times n} $ be the Laplacian of $\Bgraph$
  and define $ \Blap= \Lap \otimes \identity{d} \in \real^{nd \times
    nd}$.  The problem~\eqref{eq:dis_opt} on $\real^d$ is equivalent
  to the following problem on $\real^{nd}$,
  \begin{align}\label{eq:dis_opt_2}
    \mathrm{minimize} \quad \fLift(\xnet) = \sum_{i=1}^nf^i(x^i),
    \qquad \text{subject to} \quad \Blap \xnet = \zeros_{nd} .
  \end{align}
\end{lemma}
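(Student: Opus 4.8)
The plan is to show that the feasibility constraint $\Blap \xnet = \zeros_{nd}$ is exactly the agreement (consensus) condition $x^1 = \cdots = x^n$, and that on this set the lifted objective $\fLift$ reduces to the original objective $f$; the equivalence then follows from a value-preserving bijection. First I would analyze the constraint. Writing $\xnet = ((x^1)^T, \ldots, (x^n)^T)^T$, the $i$-th block of $\Blap \xnet = (\Lap \otimes \identity{d}) \xnet$ is $\sum_{j=1}^n (\Lap)_{ij}\, x^j$, so $\Blap \xnet = \zeros_{nd}$ means $\sum_{j=1}^n (\Lap)_{ij}\, x^j = \zeros_d$ for every $i$. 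Equivalently, collecting the estimates into a matrix $X \in \real^{n \times d}$ whose $i$-th row is $(x^i)^T$, the condition reads $\Lap X = \zeros$, i.e. every column of $X$ lies in $\ker(\Lap)$.

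Next I would invoke strong connectivity of $\Bgraph$. Since zero is a simple eigenvalue of $\Lap$ and $\Lap \ones_n = \zeros_n$, we have $\ker(\Lap) = \operatorname{span}\{\ones_n\}$. Hence each column of $X$ is a scalar multiple of $\ones_n$, which is precisely the statement that all rows of $X$ coincide, that is, $x^1 = \cdots = x^n$. Thus the feasible set of~\eqref{eq:dis_opt_2} is the agreement subspace $\{\ones_n \otimes x \mid x \in \real^d\}$, and the map $x \mapsto \ones_n \otimes x$ is a linear bijection from $\real^d$ onto it.

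Finally I would compare objective values. For a feasible point $\xnet = \ones_n \otimes x$ we have $x^i = x$ for all $i$, so $\fLift(\xnet) = \sum_{i=1}^n f^i(x^i) = \sum_{i=1}^n f^i(x) = f(x)$. Because the bijection $x \mapsto \ones_n \otimes x$ carries the entire domain of~\eqref{eq:dis_opt} onto the feasible set of~\eqref{eq:dis_opt_2} while preserving the objective value, $x^*$ minimizes $f$ over $\real^d$ if and only if $\ones_n \otimes x^*$ minimizes $\fLift$ subject to $\Blap \xnet = \zeros_{nd}$, and the two problems share the same optimal value. This establishes the claimed equivalence.

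The only delicate point is the kernel computation: one must verify that $\ker(\Lap \otimes \identity{d})$ equals $\ker(\Lap) \otimes \real^d$ rather than something larger, which is exactly where the simplicity of the zero eigenvalue of $\Lap$ (guaranteed by strong connectivity) enters. Everything else is bookkeeping on the block/Kronecker structure and a direct substitution into the objective.
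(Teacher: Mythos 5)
Your proof is correct and follows essentially the same route as the paper's: the paper's proof consists precisely of the two observations that $\fLift(\ones_n \otimes x) = f(x)$ and that, by strong connectivity, $\Blap\xnet = \zeros_{nd}$ iff $\xnet = \ones_n \otimes x$ for some $x \in \real^d$. You have simply expanded the kernel computation (via the simplicity of the zero eigenvalue of $\Lap$) and the value-preserving bijection in more detail than the paper does.
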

\begin{proof}
  The proof follows by noting that (i) $\fLift (\ones_n \otimes x) =
  f(x)$ for all $x \in \real^d$ and (ii) since $\Bgraph$ is strongly
  connected, $\Blap \xnet = \zeros_{nd}$ if and only if $\xnet =
  \ones_n \otimes x$, for some $x \in \real^d$.
\end{proof}

The formulation~\eqref{eq:dis_opt_2} is appealing because it brings
together the estimates of each agent about the value of the solution
to the original optimization problem.  Note that $\fLift$ is locally
Lipschitz and convex. Moreover, from Lemma~\ref{le:finite_sum_gen},
the elements of its generalized gradient are of the form $
\gLift_{\xnet} = (g^1_{x^1}, \ldots, g^n_{x^n}) \in \partialvector
\fLift(\xnet), $ where $ g^i_{x^i} \in \partial f^i(x^i) $, for $ i
\in \{1,\ldots, n\}$.  Since $ \fLift $ is convex and the constraints
in~\eqref{eq:dis_opt_2} are linear, the constrained optimization
problem is feasible~\cite{SB-LV:04}.

The next result introduces a function which corresponds to the
Lagrangian function associated to the constrained optimization
problem~\eqref{eq:dis_opt_2} plus an additional quadratic term that
vanishes if the agreement constraint is satisfied. Interestingly, the
saddle points of this function correspond to the solutions of the
constrained optimization problem, as we show next.

\begin{proposition}\longthmtitle{Solutions of the distributed
    optimization problem as saddle points}\label{prop:equiv-F}
  Let $\Bgraph$ be strongly connected and weight-balanced, and define
  $ \map{F}{\real^{nd} \times \real^{nd} }{\real} $ by
  \begin{equation}\label{eq:F_opt}
    F(\xnet,\znet) = \fLift(\xnet)+\xnet^T\Blap\znet +
    \frac{1}{2}\xnet^T\Blap\xnet.
  \end{equation}
  Then $ F $ is locally Lipschitz and convex in its first argument and
  linear in its second, and 
  \begin{enumerate}
  \item if $ (\xnet^*,\znet^*)$ is a saddle point of $F$, then so is $
    (\xnet^*,\znet^*+\ones_{n}\otimes a)$, for any $ a \in \real^d $.
  \item if $ (\xnet^*,\znet^*) $ is a saddle point of $F$, then $
    \xnet^* $ is a solution of~\eqref{eq:dis_opt_2}.
  \item if $ \xnet^* $ is a solution of~\eqref{eq:dis_opt_2}, then
    there exists $\znet^*$ with $\Blap \znet^* \in - \partialvector
    \fLift(\xnet^*)$ such that $ (\xnet^*,\znet^*) $ is a saddle point
    of $F$.
  \end{enumerate}
\end{proposition}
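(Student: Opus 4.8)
The plan is to establish the three claims about saddle points of $F$ by directly unpacking the saddle-point inequalities, exploiting the special structure of $F$ — convex in $\xnet$, linear in $\znet$ — together with the weight-balanced hypothesis, which gives $\ones_n^T \Lap = 0$ and hence $\Blap^T(\ones_n \otimes a) = (\Lap^T \ones_n)\otimes a = \zeros_{nd}$ for any $a \in \real^d$. Before the three items, I would verify the stated regularity: $\fLift$ is locally Lipschitz and convex (already noted in the text), the bilinear term $\xnet^T\Blap\znet$ is smooth and is linear in $\znet$ for fixed $\xnet$, and the quadratic term $\tfrac{1}{2}\xnet^T\Blap\xnet$ is convex in $\xnet$ because weight-balancedness makes $\Lap + \Lap^T$ positive semidefinite, so $\xnet^T\Blap\xnet = \tfrac12\xnet^T(\Blap+\Blap^T)\xnet \geq 0$. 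Thus $F$ is concave-convex in the sense the paper needs (here convex in the first argument, linear in the second).

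For item (i), I would simply substitute $\znet^* + \ones_n\otimes a$ into the saddle-point inequalities. Since $\xnet^T\Blap(\ones_n\otimes a) = \xnet^T((\Lap\ones_n)\otimes a) = 0$ using $\Lap\ones_n = \zeros_n$, the value of $F$ is unchanged under this shift of $\znet$: $F(\xnet,\znet+\ones_n\otimes a) = F(\xnet,\znet)$ for all $\xnet$. The saddle-point conditions for the shifted point then follow immediately from those for $(\xnet^*,\znet^*)$.

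Items (ii) and (iii) are the substantive part. For (ii), the saddle condition $F(\xnet^*,\znet) \ge F(\xnet^*,\znet^*)$ for all $\znet$, combined with linearity in $\znet$, forces the linear functional $\znet \mapsto (\xnet^*)^T\Blap\znet$ to be constant, which (since $\Blap = \Lap\otimes\identity{d}$ and by symmetry of the inner product) gives $\Blap^T\xnet^* = \zeros_{nd}$; using that zero is a simple eigenvalue of $\Lap$ for a strongly connected digraph, I would conclude $\xnet^*$ is feasible, i.e. $\Blap\xnet^* = \zeros_{nd}$, so the quadratic term vanishes and $F(\xnet^*,\znet^*) = \fLift(\xnet^*)$. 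Then the other inequality $F(\xnet,\znet^*) \ge F(\xnet^*,\znet^*)$, written as $\fLift(\xnet) + \xnet^T\Blap\znet^* + \tfrac12\xnet^T\Blap\xnet \ge \fLift(\xnet^*)$, evaluated along feasible $\xnet$ (where the last two terms behave controllably), should yield that $\xnet^*$ minimizes $\fLift$ over the feasible set. For (iii), given a minimizer $\xnet^*$ of \eqref{eq:dis_opt_2}, the constrained-optimization KKT/Lagrange conditions — valid since $\fLift$ is convex and the constraints linear, as already invoked in the text — furnish a multiplier so that $\zeros_{nd} \in \partialvector\fLift(\xnet^*) + \Blap^T\lambda$; I would set $\znet^*$ to realize $\Blap\znet^* \in -\partialvector\fLift(\xnet^*)$ (the asymmetry between $\Blap$ and $\Blap^T$ handled by the weight-balanced structure) and then verify both saddle inequalities directly, using feasibility of $\xnet^*$ to kill the quadratic term and convexity of $\fLift$ via the first-order condition \eqref{eq:convex_prop}.

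The main obstacle I anticipate is the asymmetry of $\Blap$ in the directed setting: for general digraphs $\Blap \neq \Blap^T$, so I must be careful to distinguish conditions involving $\Blap\xnet^*$ from those involving $\Blap^T\xnet^*$, and the proof of (ii) and (iii) hinges on correctly translating the optimality/stationarity conditions through the right transpose. Weight-balancedness is exactly what reconciles these — it ensures $\ones_n$ lies in the kernel of both $\Lap$ and $\Lap^T$ — and I expect the cleanest route is to first nail down feasibility (kernel of $\Blap$) and only then argue optimality, rather than attempting both saddle inequalities simultaneously.
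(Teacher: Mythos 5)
Your proposal is correct, and for item (ii) it takes a genuinely different route from the paper. Items (i) and (iii) essentially coincide with the paper's argument: the paper dismisses (i) as immediate (your computation $\Blap(\ones_n\otimes a)=(\Lap\ones_n)\otimes a=\zeros_{nd}$ is exactly why), and for (iii) both you and the paper extract from optimality of $x^*$ a subgradient $g\in\partialvector\fLift(\xnet^*)$ with $(\ones_n^T\otimes \identity{d})g=\zeros_d$ and then solve $\Blap\znet^*=-g$, which is possible because weight-balancedness plus strong connectivity give $\mathrm{range}(\Blap)=\mathrm{range}(\Blap^T)=(\ker\Blap)^{\perp}$ --- the asymmetry you rightly flag. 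For item (ii), however, the paper works at the level of first-order conditions: it characterizes saddle points by $\xnet^*=\ones_n\otimes x^*$ and $\Blap\znet^*\in-\partialvector\fLift(\xnet^*)$, then left-multiplies by $\ones_n^T\otimes\identity{d}$ to get $\sum_{i=1}^n g^i_{x^*}=\zeros_d$ and invokes Lemma~\ref{le:finite_sum_gen} to conclude $0\in\partial f(x^*)$. You instead stay at the level of function values: linearity in $\znet$ forces $\Blap^T\xnet^*=\zeros_{nd}$, hence agreement (here you correctly note that weight-balancedness, not just strong connectivity, is what identifies $\ker\Blap^T$ with $\ker\Blap$), and then restricting the $\xnet$-inequality to the agreement subspace --- where both $\xnet^T\Blap\znet^*=(\Blap^T\xnet)^T\znet^*$ and the quadratic term vanish --- yields $\fLift(\xnet)\ge\fLift(\xnet^*)$ for all feasible $\xnet$ directly. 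Your route is more elementary and avoids the subgradient bookkeeping; the paper's route has the side benefit of producing the explicit relation $\Blap\znet^*\in-\partialvector\fLift(\xnet^*)$, which is reused in the convergence proofs. One small caution: the paper's stated saddle-point definition is for concave-convex functions while $F$ is convex--linear, so the operative inequalities are $F(\xnet,\znet^*)\ge F(\xnet^*,\znet^*)$ with $F(\xnet^*,\cdot)$ extremized in the other slot; you have the essential ($\xnet$-side) inequality oriented correctly, and the sign you chose on the $\znet$-side is harmless since a linear functional bounded on either side must be constant.
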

\begin{proof}
  First, note that for $\Bgraph $ weight-balanced, $\Blap + \Blap^T$
  is positive semi-definite. Since the sum of convex functions is
  convex, one deduces that $ F $ is convex in its first argument. By
  inspection, $F$ is linear in its second argument.  The statement~(i)
  is immediate. To show~(ii), using that $\Bgraph$ is strongly
  connected, one can see that the saddle points of~$F$ are of the form
  $(\xnet^*,\znet^*) $ with $\xnet^* = \ones_{n}\otimes x^* $, $ x^*
  \in \real^{d} $, and $ \Blap\znet^* \in -\partial \fLift(\xnet^*)$.
  The last inclusion implies that there exist $g^i_{x^*} \in \partial
  f^i(x^*) $, $ i \in \{1,\ldots, n\} $, such that $\Blap \znet^* = -
  (g^1_{x^*}, \ldots, g^n_{x^*})^T$.  Noting that
  \begin{align*}
    (\ones_{n}^T\otimes \identity{d}) \Blap = (\ones_{n}^T\otimes
    \identity{d}) (\Lap \otimes \identity{d}) = \ones_{n}^T \Lap
    \otimes \identity{d} = \zeros_{d \times dn} ,
  \end{align*}
  we deduce $ \mathbf{0}_d = (\ones_{n}^T\otimes \identity{d}) \Blap
  \znet^* =- \sum_{i=1}^n g^i_{x^*} $.  As a result, using
  Lemma~\ref{le:finite_sum_gen}, $ \xnet^* $ is a solution
  of~\eqref{eq:dis_opt_2}.  Finally, (iii) follows by noting $\xnet^*
  = \ones_{n}\otimes x^*$ and the fact that $0 \in \partial f (x^*)$
  implies that there exists $\znet^*\in \real^{nd} $ with $\Blap
  \znet^* \in - \partialvector \fLift(\xnet^*)$, yielding that $
  (\xnet^*,\znet^*) $ is a saddle point of~$F$.
\end{proof}


\myclearpage
\vspace*{-1ex}
\section{Continuous-time distributed optimization on
undirected networks}\label{section:dis_opt_undirected}

Here, we review the continuous-time solution to the optimization
problem proposed in~\cite{JW-NE:10,JW-NE:11} for undirected graphs.
If $\Bgraph$ is undirected, the gradient of $F$ in~\eqref{eq:F_opt} is
distributed over $\Bgraph$. Given Proposition~\ref{prop:equiv-F}, it
is natural to consider the saddle-point dynamics of~$F$ to
solve~\eqref{eq:dis_opt},
\begin{subequations}\label{eq:CT-Laplacian-optimization}
  \begin{align}
    \dot \xnet + \Blap\xnet + \Blap\znet & \in -\partial
    \fLift(\xnet),
    \\
    \dot \znet & = \Blap \xnet.
  \end{align}
\end{subequations}
Note that~\eqref{eq:CT-Laplacian-optimization} is a set-valued
dynamical system.
Using Lemmas~\ref{le:gradient_properties} and~\ref{le:solution}, one
can guarantee the existence of solutions. Moreover, from
Proposition~\ref{prop:equiv-F}, if $(\xnet^*,\znet^*)$ is an
equilibrium of~\eqref{eq:CT-Laplacian-optimization}, then $ \xnet^* $
is a solution to~\eqref{eq:dis_opt_2}.  According to~\cite{JW-NE:11},
the dynamics~\eqref{eq:CT-Laplacian-optimization} leads the network to
agree on a global minimum of~$f$ for the case when~$\Bgraph$ is
undirected and~$f$ is both strictly convex and the sum of
differentiable convex functions.  We extend here this
result to the case when $\Bgraph$ is undirected and~$f$ is the sum of
locally Lipschitz convex functions. The proof is also useful later to
illustrate the challenges in solving the distributed optimization
problem over directed graphs.

\begin{theorem}\longthmtitle{Asymptotic convergence
    of~\eqref{eq:CT-Laplacian-optimization} on graphs}\label{theorem:dis_opt}
  Let $\Bgraph$ be a connected graph and consider the optimization
  problem~\eqref{eq:dis_opt}, where each $ f^i $, $i \in \until{n}$ is
  locally Lipschitz and convex. Then, the projection onto the first
  component of any trajectory of~\eqref{eq:CT-Laplacian-optimization}
  asymptotically converges to the set of solutions
  to~\eqref{eq:dis_opt_2}. Moreover, if $ f $ has a finite number of
  critical points, the limit of the projection onto the first
  component of each trajectory 
  is a solution of~\eqref{eq:dis_opt_2}.
\end{theorem}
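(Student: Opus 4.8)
The plan is to run a set-valued LaSalle argument built around a quadratic Lyapunov function centered at a saddle point. By Proposition~\ref{prop:equiv-F}, fix a minimizer $x^*$ of $f$ and an associated saddle point $(\xnet^*,\znet^*)$ of $F$, so that $\xnet^*=\ones_n\otimes x^*$, $\Blap\xnet^*=\zeros$, and $\Blap\znet^*=-\gLift_{\xnet^*}$ for some $\gLift_{\xnet^*}\in\partialvector\fLift(\xnet^*)$ whose blocks satisfy $\sum_{i=1}^n g^i_{x^*}=\zeros$ (this last fact comes from $(\ones_n^T\otimes\identity{d})\Blap=\zeros$). Existence of solutions of~\eqref{eq:CT-Laplacian-optimization} follows from Lemmas~\ref{le:gradient_properties} and~\ref{le:solution}, since the right-hand side is upper semicontinuous with nonempty, compact, convex values. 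I would then take
\begin{equation*}
  V(\xnet,\znet)=\tfrac{1}{2}\|\xnet-\xnet^*\|^2+\tfrac{1}{2}\|\znet-\znet^*\|^2 ,
\end{equation*}
which is radially unbounded, so that once $\dot V\le 0$ is shown its sublevel sets are compact and strongly positively invariant, and trajectories are bounded.

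The first technical step is the Lie-derivative computation. Writing the dynamics with a selection $\gLift_\xnet\in\partialvector\fLift(\xnet)$ and using that $\Bgraph$ is undirected ($\Blap=\Blap^T$), together with $\Blap\xnet^*=\zeros$, $\Blap\znet^*=-\gLift_{\xnet^*}$, and $\gLift_{\xnet^*}^T\xnet^*=(\sum_i g^i_{x^*})^Tx^*=0$, the cross terms arising from $\xnet^T\Blap\znet$ cancel and every element of $\SetLie_{\Psi}V(\xnet,\znet)$ reduces to $-(\gLift_\xnet-\gLift_{\xnet^*})^T(\xnet-\xnet^*)-(\xnet-\xnet^*)^T\Blap(\xnet-\xnet^*)$. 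Both terms are nonnegative: the first by monotonicity of the subdifferential of the convex function $\fLift$ (add the two instances of~\eqref{eq:convex_prop}), the second because $\Blap$ is positive semidefinite on a connected graph. Hence $\max\SetLie_{\Psi}V\le 0$ on any sublevel set $W=\levelset{(\xnet_0,\znet_0)}$, and Theorem~\ref{th:laSalle} applies.

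The crux is characterizing $S_{\Psi,V}=\{0\in\SetLie_{\Psi}V\}$. Since the two nonnegative terms must vanish simultaneously, positive semidefiniteness forces $\Blap(\xnet-\xnet^*)=\zeros$, i.e.\ $\xnet-\xnet^*\in\ker\Blap=\{\ones_n\otimes c : c\in\real^d\}$, so $\xnet=\ones_n\otimes\bar x$; moreover there is $g^i_{\bar x}\in\partial f^i(\bar x)$ with $\sum_{i=1}^n(g^i_{\bar x}-g^i_{x^*})^T(\bar x-x^*)=0$. As each summand is nonnegative it vanishes, and sandwiching $f^i(\bar x)-f^i(x^*)$ between $(g^i_{x^*})^T(\bar x-x^*)$ and $(g^i_{\bar x})^T(\bar x-x^*)$ through~\eqref{eq:convex_prop} gives $f^i(\bar x)-f^i(x^*)=(g^i_{x^*})^T(\bar x-x^*)$; summing and using $\sum_i g^i_{x^*}=\zeros$ yields $f(\bar x)=f(x^*)=\min f$. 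Thus the first component of every point of $S_{\Psi,V}$ is a minimizer of $f$, hence a solution of~\eqref{eq:dis_opt_2}. The main obstacle lives precisely here: one must extract a single subgradient selection realizing the vanishing of the monotonicity term and then run this convexity sandwich to conclude agreement \emph{on a minimizer} rather than mere agreement, and one must check that the minimizer set being closed keeps the first-component projection of $\bar S_{\Psi,V}$ inside it. Granting this, the largest weakly invariant set $M\subseteq\bar S_{\Psi,V}\cap W$ has its first component contained in the solution set, so by Theorem~\ref{th:laSalle} the projection of any trajectory onto $\xnet$ converges to the solutions of~\eqref{eq:dis_opt_2}.

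For the final claim I would argue by connectedness: the $\omega$-limit set of the bounded, absolutely continuous curve $\xnet(\cdot)$ is compact and connected, and by the above it lies in the solution set of~\eqref{eq:dis_opt_2}. When $f$ has finitely many critical points, convexity makes its minimizer set---and therefore the solution set---finite; a connected subset of a finite set is a single point, so the projection $\xnet(t)$ converges to one solution.
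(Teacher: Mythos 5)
Your proposal is correct, and while it shares the overall skeleton of the paper's proof (the same Lyapunov function $\Lyap$ centered at a saddle point, existence of which is supplied by Proposition~\ref{prop:equiv-F}, followed by the set-valued LaSalle principle), it diverges at the two technical pinch points. For the nonpositivity of the Lie derivative, the paper never cancels the cross terms explicitly: it recognizes $\Blap\xnet+\Blap\znet+\gLift_{\xnet}$ as an element of $\partial_{\xnet}F$ and bounds $\xi$ by $F(\xnet^*,\znet)-F(\xnet^*,\znet^*)+F(\xnet^*,\znet^*)-F(\xnet,\znet^*)\le 0$ using convexity--linearity of $F$ and the saddle-point property; you instead use $\Blap=\Blap^T$, $\Blap\xnet^*=\zeros$ and $\Blap\znet^*=-\gLift_{\xnet^*}$ to reduce $\xi$ to $-(\gLift_{\xnet}-\gLift_{\xnet^*})^T(\xnet-\xnet^*)-(\xnet-\xnet^*)^T\Blap(\xnet-\xnet^*)$, which is manifestly nonpositive by monotonicity of the subdifferential and positive semidefiniteness of $\Blap$. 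Your form has the advantage that $S_{\optdyn,\Lyap}$ can be characterized pointwise: the two nonnegative terms must vanish separately, giving agreement from $\ker\Blap$ and, via the per-agent convexity sandwich, $f(\bar x)=f(x^*)$; the paper instead gets only $F(\xnet^*,\znet^*)-F(\xnet,\znet^*)=0$ and must introduce the auxiliary function $G$ to extract $\Blap\xnet=\zeros$ and $\fLift(\xnet)=\fLift(\xnet^*)$, and then invoke weak positive invariance of $M$ together with Lemma~\ref{lemma:flow-in-critical} to pin down the limit set --- a step your argument bypasses entirely. Finally, for the finite-critical-point claim the paper routes through $M\subseteq\equilibria{\optdyn}$ and the ``finite collection of points'' clause of Theorem~\ref{th:laSalle}, whereas you use compactness and connectedness of the $\omega$-limit set of the bounded curve $t\mapsto\xnet(t)$ inside a finite solution set; your version is arguably cleaner since it addresses exactly the projected statement without needing $M$ itself to be finite. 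The only points worth tightening are the ones you already flag: the vanishing of the monotonicity term holds for the particular selection $\gLift_{\xnet}$ realizing $0\in\SetLie_{\optdyn}\Lyap$, and closedness of the lifted solution set is what transfers the characterization from $S_{\optdyn,\Lyap}$ to its closure. Neither is a gap.
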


\begin{proof}
  For convenience, we denote the
  dynamics~\eqref{eq:CT-Laplacian-optimization} by
  $\setmap{\optdyn}{\real^{n d} \times \real^{n d}}{\real^{n d} \times
    \real^{n d}}$.  Let $\xnet^* = \ones_n \otimes x^*$ be a solution
  of~\eqref{eq:dis_opt_2}. By Proposition~\ref{prop:equiv-F}(iii),
  there exists $\znet^*$ such that $ (\xnet^*,\znet^*) \in
  \equilibria{\optdyn} $.  First, note that given any initial
  condition $ (\xnet_0,\znet_0) \in \real^{nd} \times \real^{nd} $,
  the set 
  \begin{equation}\label{eq:S_z0}
    W_{\znet_0} = \{(\xnet,\znet) \ | \ (\ones_{n}^T \otimes
    \identity{d}) \znet = (\ones_{n}^T \otimes \identity{d}) \znet_0\}
  \end{equation}
  is strongly positively invariant
  under~\eqref{eq:CT-Laplacian-optimization}. Consider then the
  function $ \map{V}{\real^{nd} \times \real^{nd}}{\real_{\geq0}} $,
  \begin{equation}\label{eq:Lyap}
    \Lyap(\xnet,\znet) = \frac{1}{2}(\xnet-\xnet^*)^T(\xnet-\xnet^*) +
    \frac{1}{2}(\znet-\znet^*)^T(\znet-\znet^*).
  \end{equation}
  The function $V$ is smooth. Let us examine its set-valued Lie
  derivative. For each $ \xi \in \SetLie_{\optdyn}V(\xnet,\znet) $,
  there exists $v = (-\Blap\xnet-\Blap \znet -\gLift_{\xnet},
  \Blap\xnet) \in \optdyn(\xnet,\znet) $, with $ \gLift_{\xnet}
  \in \partial \fLift(\xnet) $, such that
  \begin{align}\label{eq:xi-undirected}
    \xi = v^T \nabla V(\xnet,\znet) = -(\xnet-\xnet^*)^T (
    \Blap\xnet+\Blap \znet + \gLift_{\xnet}) + (\znet-\znet^*)^T \Blap
    \xnet .
  \end{align}
  Since $F$ is convex in its first argument and $\Blap\xnet+\Blap
  \znet + \gLift_{\xnet} \in \partial_{\xnet}F(\xnet,\znet)$, using
  the first-order condition of convexity~\eqref{eq:convex_prop}, we
  deduce $ (\xnet^*-\xnet)^T( \Blap\xnet+\Blap \znet + \gLift_{\xnet})
  \le F(\xnet^*, \znet)-F(\xnet, \znet)$.  On the other hand, the
  linearity of $F$ in its second argument implies that $
  (\znet-\znet^*)^T \Blap \xnet = F(\xnet, \znet)-F(\xnet, \znet^*)$.
  Therefore,
  $\xi \leq F(\xnet^*, \znet) - F(\xnet^*, \znet^*) + F(\xnet^*,
  \znet^*)- F(\xnet, \znet^*) $.
  Since the equilibria of $\optdyn$ are the saddle points of $F$, we
  deduce that $ \xi \leq 0$.  Since $ \xi $ is arbitrary, we conclude
  $ \max \SetLie_{\optdyn}V (\xnet,\znet) \leq 0$. As a by-product,
  the trajectories of~\eqref{eq:CT-Laplacian-optimization} are
  bounded. Consequently, all assumptions of the set-valued version of
  the LaSalle Invariance Principle, cf. Theorem~\ref{th:laSalle}, are
  satisfied. This result then implies that any trajectory
  of~\eqref{eq:CT-Laplacian-optimization} starting from an initial
  condition $ (\xnet_0,\znet_0) $ converges to the largest weakly
  positively invariant set $ M $ in $ S_{\optdyn,\Lyap} \cap
  W_{\znet_0}$. Our final step consists of characterizing $ M $.  Let
  $ (\xnet,\znet) \in M$.  Then $F(\xnet^*, \znet^*) - F(\xnet,
  \znet^*) = 0 $, i.e.,
  \begin{align}\label{eq:aux-eq}
    \fLift(\xnet^*) -\fLift(\xnet)-(\znet^*)^T\Blap \xnet
    -\frac{1}{2}\xnet^T\Blap \xnet =0 .
  \end{align}
  Define now $\map{G}{\real^{nd} \times \real^{nd} }{\real} $ by
  $G(\xnet,\znet) = \fLift(\xnet)+\znet^T\Blap\xnet$. Note that $G$ is
  convex in its first argument and linear in its second, and that it
  has the same saddle points as $ F $. As a result, $
  G(\xnet^*,\znet^*)-G(\xnet,\znet^*) \leq 0$, or equivalently,
  $\fLift(\xnet^*) -\fLift(\xnet)-(\znet^*)^T \Blap \xnet\leq 0 $.
  Combining this with~\eqref{eq:aux-eq}, we have $ \Blap \xnet=0$ and
  $ -\fLift(\xnet)+\fLift(\xnet^*) = 0$, i.e., $ \xnet $ is solution
  to~\eqref{eq:dis_opt_2}.  Since $ M $ is weakly positively
  invariant, there exists at least a solution
  of~\eqref{eq:CT-Laplacian-optimization} starting from
  $(\xnet,\znet)$ that remains in $M$. This implies that, along the
  solution, the components of $\xnet$ remain in agreement, i.e., $
  \xnet(t) = \ones_{n} \otimes a(t)$ with $ a(t) \in \real^d $ a
  solution of~\eqref{eq:dis_opt}.  Applying $ \ones_n^T\otimes
  \identity{d} $ on both sides of $ \ones_{n} \otimes \dot{a}(t)
  +\Blap \znet \in -\partial \fLift(\xnet(t)) $, we deduce $
  n\dot{a}(t) \in -\sum_{i=1}^n\partial f^i( a(t)) $.
  Lemma~\ref{lemma:flow-in-critical} then implies that $ \dot{a}(t)=0
  $, i.e., $ \Blap \znet \in -\partial \fLift(\xnet) $ and thus $
  (\xnet,\znet)\in \equilibria{\optdyn} $.  Finally, if the set of
  equilibria is finite, the last statement holds true.
\end{proof}

\begin{remark}\longthmtitle{Asymptotic convergence of saddle-point
    dynamics}
  The work~\cite{KA-LH-HU:58} studies saddle-point dynamics and
  guarantees asymptotic convergence to a saddle point when the
  function's Hessian in one argument is positive definite and the
  function is linear in the other.  Such result, however, cannot be
  applied to establish Theorem~\ref{theorem:dis_opt} because the
  generality of the hypotheses on~$f$ mean that~$F$ might not satisfy
  these conditions. Instead, our proof shows that a careful study of
  the invariance properties of the flow yields the desired result.
  \oprocend
\end{remark}

\vspace*{-.5ex}
\section{Continuous-time distributed optimization on directed
  networks}\label{section:dis_opt_directed}


Here, we consider the optimization problem~\eqref{eq:dis_opt} on
digraphs.  When $\Bgraph$ is directed, the gradient of $F$ defined
in~\eqref{eq:F_opt} is no longer distributed over $\Bgraph$ because it
contains terms that involve $\Blap^T$ and hence requires agents to
receive information from its in-neighbors. In fact, the
dynamics~\eqref{eq:CT-Laplacian-optimization}, which is distributed
over $\Bgraph$, does no longer correspond to the saddle-point dynamics
of $F$. Nevertheless, it is natural to study
whether~\eqref{eq:CT-Laplacian-optimization} enjoys the same
convergence properties as in the undirected setting (as, for instance,
is the case in the agreement
problem~\cite{ROS-JAF-RMM:07,WR-RWB:08}).  Surprisingly, this
turns out not to be the case, as shown in
Section~\ref{se:surprise}. This result motivates the introduction in
Section~\ref{se:the-right-one} of an alternative provably correct
dynamics on weight-balanced directed graphs.


\vspace{-2ex}
\subsection{Counterexample}\label{se:surprise}

Here, we provide an example of a strongly connected, weight-balanced
digraph on which~\eqref{eq:CT-Laplacian-optimization} fails to
converge.  For convenience, we let $\SS= \setdef{(\ones_{n} \otimes x,
  \ones_{n} \otimes z) \in \real^{nd} \times \real^{nd}}{x,z \in
  \real^d}$ denote the set of agreement configurations. Our
construction relies on the following result.

\begin{lemma}\longthmtitle{Necessary condition for the convergence
    of~\eqref{eq:CT-Laplacian-optimization} on
    digraphs}\label{lemma:necessary_linear_directed} 
  Let $\Bgraph$ be a strongly connected digraph and $ f^i=0$, $ i \in
  \{1,\ldots, n\} $. Then $\SS$ is stable
  under~\eqref{eq:CT-Laplacian-optimization} iff, for any nonzero
  eigenvalue $\lambda$ of the Laplacian $ \Lap $, one has
  $\sqrt{3}|\impart (\lambda)| \le \realpart (\lambda) $.
\end{lemma}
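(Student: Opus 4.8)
The plan is to use that the hypothesis $f^i = 0$ collapses \eqref{eq:CT-Laplacian-optimization} into a \emph{linear} differential equation, turning the stability of $\SS$ into a spectral question. Since $\partial\fLift(\xnet) = \{\zeros_{nd}\}$, the dynamics reads $\dot\xnet = -\Blap\xnet - \Blap\znet$ and $\dot\znet = \Blap\xnet$, that is $\dot{w} = A w$ with $w = (\xnet,\znet)$ and
\[
  A = \begin{pmatrix} -\Blap & -\Blap \\ \Blap & \zeros \end{pmatrix}.
\]
First I would identify $\SS$ with $\ker A$: solving $A w = \zeros$ forces $\Blap\xnet = \Blap\znet = \zeros_{nd}$, and since $\Bgraph$ is strongly connected $\ker\Blap = \{\ones_n \otimes x \mid x \in \real^d\}$, so indeed $\ker A = \SS$. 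Thus $\SS$ is exactly the equilibrium subspace of a linear flow, and I would invoke the standard fact that such a subspace is (Lyapunov) stable if and only if every eigenvalue of $A$ has nonpositive real part and every eigenvalue on the imaginary axis is semisimple.

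The next step is to compute the spectrum of $A$. Because all four blocks are polynomials in $\Blap$ they commute, so the identity $\det\begin{pmatrix} P & Q \\ R & S \end{pmatrix} = \det(PS - QR)$, valid when $R$ and $S$ commute, gives $\det(A - \mu\identity{2nd}) = \det(\mu^2\identity{nd} + \mu\Blap + \Blap^2)$. Writing $\Blap = \Lap\otimes\identity{d}$ and letting $0 = \lambda_1, \lambda_2, \dots, \lambda_n$ be the eigenvalues of $\Lap$, this factors as $\prod_{i=1}^n (\mu^2 + \mu\lambda_i + \lambda_i^2)^d$, whence the eigenvalues of $A$ are $\mu_\pm(\lambda) = \lambda\,\tfrac{-1 \pm \sqrt{3}\,i}{2}$, two for each eigenvalue $\lambda$ of $\Lap$ (each with multiplicity $d$). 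The simple eigenvalue $\lambda_1 = 0$ contributes only $\mu = 0$, associated with a zero diagonal block and hence semisimple; it accounts precisely for the center $\SS$.

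The core computation is then to read off the real parts. With $\lambda = \realpart(\lambda) + i\,\impart(\lambda)$, expanding the product gives $\realpart(\mu_\pm(\lambda)) = -\tfrac12\big(\realpart(\lambda) \pm \sqrt{3}\,\impart(\lambda)\big)$, so both are nonpositive exactly when $\realpart(\lambda) \ge \sqrt{3}\,|\impart(\lambda)|$. Ranging over the nonzero eigenvalues yields the claimed equivalence: if the inequality fails for some $\lambda$ then $A$ has an eigenvalue with positive real part, destabilizing $\SS$; if it holds strictly for every $\lambda$ the transverse eigenvalues are all strictly stable and $\SS$ is stable.

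The delicate point, and the step I expect to be the main obstacle, is the boundary $\realpart(\lambda) = \sqrt{3}\,|\impart(\lambda)|$, where $A$ acquires a nonzero purely imaginary eigenvalue; Lyapunov stability then additionally demands that this eigenvalue be semisimple. I would therefore need to track the Jordan structure through the $2\times 2$ block reduction and argue that semisimplicity of $\mu_\pm(\lambda)$ follows from that of $\lambda$ as an eigenvalue of $\Lap$, so that the closed inequality in the statement is exactly right rather than only its spectral real-part relaxation.
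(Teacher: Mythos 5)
Your proposal is correct and follows essentially the same route as the paper: with $f^i=0$ the dynamics is linear, $\SS$ is its equilibrium subspace, and the stability condition is read off the real parts of the spectrum. The only cosmetic difference is how the spectrum is obtained: the paper simply observes that the system matrix is the Kronecker product $\left(\begin{smallmatrix}-1&-1\\1&0\end{smallmatrix}\right)\otimes\Blap$, so its eigenvalues are the products $\lambda\big(\tfrac{-1}{2}\pm\tfrac{\sqrt{3}}{2}i\big)$, whereas you recover the same factorization through a commuting-block determinant. The Kronecker viewpoint also disposes of the Jordan-structure obstacle you flag at the end: since the $2\times2$ factor has distinct eigenvalues $\mu_\pm=\tfrac{-1\pm\sqrt{3}i}{2}$, the product is similar to $\mathrm{diag}(\mu_+\Blap,\mu_-\Blap)$, and for $\mu_\pm\neq 0$ each block $\mu_\pm\Blap$ has exactly the Jordan structure of $\Blap$, so semisimplicity of a critical eigenvalue of the system matrix is equivalent to semisimplicity of the corresponding $\lambda$ in $\Lap$. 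Note that the paper's own proof silently equates stability with nonpositive real parts and never addresses this boundary case (nor the fact that strong connectivity alone does not guarantee nonzero Laplacian eigenvalues are semisimple), so your explicit treatment of the imaginary-axis eigenvalues is, if anything, more careful than the source; completing it along the lines above would finish the argument.
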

\begin{proof}
  By assumption, the dynamics~\eqref{eq:CT-Laplacian-optimization} is
  linear with matrix $\left(\begin{smallmatrix} -1 &
      -1\\1 & 0\end{smallmatrix} \right)\otimes  \Blap$ and has $\SS$ as
  equilibria. The eigenvalues of the matrix are of the form $\lambda
  \, \big(\frac{-1}{2}\pm\frac{\sqrt{3}}{2}i \big)$, with $\lambda$ an
  eigenvalue of $\Blap$ (because the eigenvalues of a Kronecker
  product are just the product of the eigenvalues of the corresponding
  matrices). Since $\Blap = \Lap \otimes \identity{d}$, each
  eigenvalue of $\Blap$ is an eigenvalue of $ \Lap $.  Finally, $
  \realpart \big(\lambda \big(\frac{-1}{2}\pm\frac{\sqrt{3}}{2}i
  \big)\big) = \frac{1}{2} ( \mp
  \sqrt{3}\impart(\lambda)-\realpart(\lambda))$, from which the result
  follows.
\end{proof}

It is not difficult to construct examples of convex functions that
have zero contribution to the linearization
of~\eqref{eq:CT-Laplacian-optimization} around the
solution. Therefore, such systems cannot be convergent if they fail
the criterium identified in
Lemma~\ref{lemma:necessary_linear_directed}. The next example shows
that this criterium can fail even for strongly connected
weight-balanced digraphs.

\begin{example}\label{example:counter}
  Consider the strongly connected, weight-balanced digraph
  with 
  \[
  A =
  \begin{pmatrix}
      0& 0.5326  &  0.1654 & 0.0004  & 0.0002\\
      0.0595   &      0  &  0.6676  &  0.0681  &  0.1230\\
      0.0213  &  0.0004 & 0  &  0.5809  &  0.3181\\
      0.0248  &  0.2458   &      0   &      0  &  0.5587\\
      0.5930  &  0.1394  &  0.0877  &  0.1799   &  0
    \end{pmatrix}
  \]
  as adjacency matrix.  Note that $ \lambda= 0.8833\pm 0.5197i $ is an
  eigenvalue of the Laplacian. Since $\sqrt{3}
  |\impart(\lambda)|-\realpart(\lambda)=0.0171>0$,
  Lemma~\ref{lemma:necessary_linear_directed} implies
  that~\eqref{eq:CT-Laplacian-optimization} fails to converge.
  \oprocend
\end{example}


\vspace*{-1ex}
\subsection{Provably correct distributed dynamics on directed
  graphs}\label{se:the-right-one}

Here, given the result in Section~\ref{se:surprise}, we introduce an
alternative continuous-time distributed dynamics for strongly
connected weight-balanced digraphs. For reasons that will be made
clear later in Remark~\ref{remark:diff_assumption}, we restrict our
attention to the case when the functions $f^i$, $i \in \until{n}$ are
continuously differentiable. 
Let $ \alpha \in \realpositive $ and consider the dynamics
\begin{subequations}\label{eq:CT-Laplacian-optimization-NEW}
  \begin{align}
    \dot \xnet + \alpha \Blap\xnet + \Blap\znet & = -\gradient
    \fLift(\xnet),
    \\
    \dot \znet & = \Blap \xnet.
  \end{align}  
\end{subequations}
The existence of solutions is guaranteed by
Lemmas~\ref{le:gradient_properties} and~\ref{le:solution}.  We first
show that appropriate choices of $\alpha$ allow to circumvent the
problem raised in Lemma~\ref{lemma:necessary_linear_directed}.

\begin{lemma}\longthmtitle{Sufficient conditions for the convergence
    of~\eqref{eq:CT-Laplacian-optimization-NEW} on digraphs with
    trivial objective 
    function}\label{lemma:necessary_linear_directed-NEW} 
  Let $\Bgraph$ be a strongly connected and weight-balanced digraph
  and $f^i=0$, $ i \in \{1,\ldots, n\} $. If $ \alpha \geq 2
  \sqrt{2}$, then $\SS$ is asymptotically stable
  under~\eqref{eq:CT-Laplacian-optimization-NEW}.
\end{lemma}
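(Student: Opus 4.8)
The plan is to use that, with $f^i=0$ for all $i$, the gradient $\gradient\fLift(\xnet)$ vanishes identically and the dynamics~\eqref{eq:CT-Laplacian-optimization-NEW} collapses to the linear system $\dot\zeta = M\zeta$ on $\real^{2nd}$, where $\zeta=(\xnet,\znet)$ and, mirroring the proof of Lemma~\ref{lemma:necessary_linear_directed},
\begin{equation*}
  M = \begin{pmatrix} -\alpha\Blap & -\Blap \\ \Blap & 0 \end{pmatrix} = \begin{pmatrix} -\alpha & -1 \\ 1 & 0 \end{pmatrix}\otimes\Blap =: C\otimes\Blap .
\end{equation*}
Since the eigenvalues of a Kronecker product are the pairwise products of the eigenvalues of its factors (counted with algebraic multiplicity), and $\text{spec}(\Blap)=\text{spec}(\Lap)$ because $\Blap=\Lap\otimes\identity{d}$, the whole question reduces to locating $\text{spec}(C)$ and $\text{spec}(\Lap)$ separately and checking where the products $\mu\lambda$ fall.

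Next I would analyze the two factors. The characteristic polynomial of $C$ is $\mu^2+\alpha\mu+1$, giving $\mu_\pm=\tfrac{1}{2}(-\alpha\pm\sqrt{\alpha^2-4})$; because $\mu_+\mu_-=\det C=1>0$ and $\mu_++\mu_-=-\alpha<0$ while the discriminant is nonnegative for $\alpha\ge 2$, the hypothesis $\alpha\ge 2\sqrt{2}$ ensures both $\mu_\pm$ are \emph{real and strictly negative}. For the Laplacian, I would invoke that $\Bgraph$ is strongly connected and weight-balanced, so $\Lap+\Lap^T$ is positive semidefinite with a simple zero eigenvalue: for any eigenpair $\Lap v=\lambda v$, $v\neq 0$, the identity $\realpart(v^*\Lap v)=\tfrac{1}{2}v^*(\Lap+\Lap^T)v\ge 0$ together with $v^*\Lap v=\lambda\|v\|^2$ forces $\realpart(\lambda)\ge 0$, with equality only for $v\in\mathrm{span}\{\ones_n\}$, i.e.\ $\lambda=0$. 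Hence every nonzero eigenvalue of $\Lap$ has strictly positive real part.

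Combining the two facts, for each nonzero eigenvalue $\lambda$ of $\Lap$ and each $\mu\in\{\mu_+,\mu_-\}$ one has $\realpart(\mu\lambda)=\mu\,\realpart(\lambda)<0$, since $\mu$ is real and negative and $\realpart(\lambda)>0$; thus all eigenvalues of $M$ associated with $\lambda\neq 0$ lie in the open left half-plane. The remaining eigenvalues of $M$ are all $0$, and I would pin down their structure directly: $\zeta\in\ker M$ forces $\Blap\xnet=\zeros_{nd}$ and then $\Blap\znet=\zeros_{nd}$, so $\ker M=\SS$, which has dimension $2d$; meanwhile the algebraic multiplicity of $0$ in $\text{spec}(M)$ is also $2d$ (the simple zero of $\Lap$ contributes multiplicity $d$ to $\Blap$, doubled by the two eigenvalues of $C$). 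Equality of geometric and algebraic multiplicities makes $0$ semisimple, so $\real^{2nd}=\SS\oplus E_-$ with both subspaces $M$-invariant, $M|_{\SS}=0$, and $M|_{E_-}$ Hurwitz. Then $e^{tM}$ fixes the $\SS$-component of any initial condition while driving the $E_-$-component to zero, which is exactly asymptotic stability of $\SS$.

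I expect the delicate step to be the zero-eigenvalue analysis: asymptotic stability of the \emph{set} $\SS$, rather than of isolated points, hinges on excluding Jordan blocks at $0$, since any such block would create solutions growing polynomially along $\SS$ and break stability. The dimension count $\dim\ker M=2d=$ (algebraic multiplicity of $0$) settles this cleanly and bypasses the generally intricate Jordan structure of a Kronecker product. By contrast, the negativity of $\realpart(\mu\lambda)$ for $\lambda\neq 0$ is routine once the factors are located, and the threshold $\alpha\ge 2\sqrt{2}$ intervenes only to push $\mu_\pm$ off the imaginary axis onto the negative real line (indeed any $\alpha\ge 2$ already suffices for this linear calculation).
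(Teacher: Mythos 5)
Your proof is correct, but it follows a genuinely different route from the paper's. The paper changes coordinates to $(\xnet,\beta\xnet+\znet)$ with $\beta$ a real root of $\beta^2-\alpha\beta+2=0$ (this is precisely where the threshold $\alpha\ge 2\sqrt{2}$ enters), takes $V(\xnet,\ynet)=\xnet^T\xnet+\ynet^T\ynet$ as a Lyapunov function, shows that the symmetrized matrix $Q$ factors as a negative semidefinite $2\times 2$ matrix Kronecker-multiplied with $\Blap+\Blap^T$, and concludes via the LaSalle invariance principle. You instead exploit the Kronecker structure $M=C\otimes\Blap$ spectrally: locating $\mathrm{spec}(C)$ on the negative real axis for $\alpha\ge 2$, using weight-balancedness only to get $\realpart(\lambda)>0$ for nonzero $\lambda\in\mathrm{spec}(\Lap)$, and disposing of the zero eigenvalue by the dimension count $\dim\ker M=2d=$ algebraic multiplicity, which rules out Jordan blocks at $0$ and yields the invariant splitting $\real^{2nd}=\SS\oplus E_-$ with $M|_{E_-}$ Hurwitz. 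Your semisimplicity step is the right thing to worry about and is handled correctly; this is in the spirit of the paper's proof of Lemma~\ref{lemma:necessary_linear_directed} rather than of this lemma. What each approach buys: yours is sharper for the linear case (as you note, $\alpha\ge 2$ already suffices, so the stated threshold $2\sqrt{2}$ is not tight here) and avoids choosing a Lyapunov function; the paper's Lyapunov construction is deliberately engineered so that the same change of variables, the same $Q$, and the constraint $\beta^2-\alpha\beta+2=0$ can be reused verbatim in the nonlinear setting of Theorem~\ref{theorem:dis_opt-directed}, where a purely spectral argument is unavailable --- which is the real reason the paper works with $\alpha\ge 2\sqrt{2}$.
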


\begin{proof}
  When all $ f_i $, $i \in \until{n}$, are identically zero, the
  dynamics~\eqref{eq:CT-Laplacian-optimization-NEW} is linear and has
  $\SS$ as equilibria.  Consider the coordinate transformation from
  $(\xnet,\znet) $ to $(\xnet, \ynet) = (\xnet,\beta \xnet+\znet )$,
  with $ \beta \in \realpositive $ to be chosen later.  The dynamics
  can be rewritten as
  \begin{align}
    \begin{pmatrix}\label{eq:A_linear}
      \dot \xnet
      \\
      \dot \ynet
    \end{pmatrix}
    =
    \Alin
    \begin{pmatrix}
      \xnet
      \\
      \ynet \end{pmatrix}, \quad \mathrm{where} \quad \Alin=
    \begin{pmatrix} 
      -(\alpha-\beta) \Blap & - \Blap\\
      (-\beta(\alpha-\beta)+1)\Blap & -\beta \Blap
    \end{pmatrix}.
  \end{align}
  Consider the candidate Lyapunov function $ \Lyap(\xnet,\ynet) =
  \xnet^T \xnet + \ynet^T\ynet$. Its Lie derivative is the quadratic
  form defined by the matrix
  \begin{align*}
    Q = \identity{2 nd} \Alin + \Alin^T \identity{2nd}=
   \begin{pmatrix}
     -(\alpha-\beta) (\Blap+\Blap^T) & -
     \Blap+(-\beta(\alpha-\beta)+1)\Blap^T
     \\
     (-\beta(\alpha-\beta)+1)\Blap-\Blap^T & -\beta (\Blap+\Blap^T)
   \end{pmatrix}.
 \end{align*}
 Select $\beta$ now satisfying $ \beta^2-\alpha \beta+2=0 $ (this
 equation has a real solution if $\alpha \ge 2 \sqrt{2}$). Then,
 \begin{equation}\label{eq:Q_with_y}
   Q=\begin{pmatrix} 
     -(\frac{\beta^2+2}{\beta}-\beta) & -1\\
     -1 & -\beta
   \end{pmatrix}\otimes(\Blap+\Blap^T) .
 \end{equation}
 Each eigenvalue $ \eta $ of $ Q $ is of the form $ \eta = \lambda
 \frac{-(\beta^2+2)\pm\sqrt{(\beta^2+2)^2-4\beta^2}}{2\beta}$, where $
 \lambda $ is an eigenvalue of $ \Lap+\Lap^T $. Since $\Bgraph$ is
 strongly connected and weight-balanced, $ \Blap+\Blap^T $ is positive
 semidefinite with a simple eigenvalue at zero, and hence $ \eta \leq
 0 $. By the LaSalle invariance principle, the solutions
 of~\eqref{eq:CT-Laplacian-optimization-NEW} from any initial
 condition $ (\xnet_0,\ynet_0) \in \real^{nd} \times \real^{nd} $,
 asymptotically converge to the set $ S=\{(\xnet,\ynet) \ | \
 Q(\xnet,\ynet)^T = \zeros_{2nd}\}\cap W_{\znet_0}$.  To conclude the
 result, we need to show that $ S\subseteq \SS $.  This follows from
 noting that, for $\beta>0$, $ Q(\xnet,\ynet)^T = \zeros_{2nd}$
 implies that $ (\Blap+\Blap^T)\xnet=\zeros_{nd} $ and $
 (\Blap+\Blap^T)\ynet=\zeros_{nd} $, i.e., $ (\xnet,\ynet) \in \SS $.
\end{proof}

The reason behind the introduction of the parameter $\alpha$
in~\eqref{eq:CT-Laplacian-optimization-NEW} comes from the following
observation: if one tries to reproduce the proof of
Theorem~\ref{theorem:dis_opt} for a digraph, one encounters indefinite
terms of the form $(\xnet-\xnet^*)^T(\Blap-\Blap^T)(\znet-\znet^*)$ in
the Lie derivative of $V$, invalidating it as a Lyapunov
function. However, the proof of
Lemma~\ref{lemma:necessary_linear_directed-NEW} shows that an
appropriate choice of $\alpha$, together with a suitable change of
coordinates, makes the quadratic form defined by the identity matrix a
valid Lyapunov function.  We next build on these observations to
establish our main result: the
dynamics~\eqref{eq:CT-Laplacian-optimization-NEW} solves in a
distributed way the optimization problem~\eqref{eq:dis_opt} on
strongly connected weight-balanced digraphs.

\begin{theorem}\longthmtitle{Asymptotic convergence
    of~\eqref{eq:CT-Laplacian-optimization-NEW} on weight-balanced 
    digraphs}\label{theorem:dis_opt-directed}  
  Let $\Bgraph$ be a strongly connected, weight-balanced digraph and
  consider the optimization problem~\eqref{eq:dis_opt}, where each $
  f^i $, $ i \in \{1,\ldots,n\}$, is convex and differentiable with
  globally Lipschitz continuous gradient.  Let $ K \in \realpositive $
  be the Lipschitz constant of $ \gradient \fLift $ and define $
  \map{h}{\realpositive}{\real} $ by
  \begin{align}\label{eq:h}
    h(r) = \frac{1}{2}\smallnonzeroeig (\Lap+\Lap^T)
    \left(-\frac{r^4+3r^2+2}{r}+\sqrt{\left(
          \frac{r^4+3r^2+2}{r}\right)^2-4} \right) +\frac{K
      r^2}{(1+r^2)} ,
  \end{align}
  where $ \smallnonzeroeig(\cdot) $ denotes the non-zero eigenvalue
  with smallest absolute value. Then, there exists $ \beta^* \in
  \realpositive $ with $ h(\beta^*)=0 $ such that, for all $ 0 < \beta
  < \beta^* $, the projection onto the first component of any
  trajectory of~\eqref{eq:CT-Laplacian-optimization-NEW} with $
  \alpha=\frac{\beta^2+2}{\beta} $ asymptotically converges to the set
  of solutions of~\eqref{eq:dis_opt_2}. Moreover, if $ f $ has a
  finite number of critical points, the limit of the projection onto
  the first component of each trajectory is a solution
  of~\eqref{eq:dis_opt_2}.
\end{theorem}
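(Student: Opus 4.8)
The plan is to mirror the Lyapunov-plus-LaSalle argument of Theorem~\ref{theorem:dis_opt}, but carried out \emph{after} the change of coordinates introduced in the proof of Lemma~\ref{lemma:necessary_linear_directed-NEW}, which is exactly what tames the indefinite terms flagged in the paragraph preceding the theorem. First I would characterize the equilibria of~\eqref{eq:CT-Laplacian-optimization-NEW}: setting the right-hand sides to zero gives $\Blap\xnet^*=\zeros_{nd}$ and $\Blap\znet^*=-\gradient\fLift(\xnet^*)$, so by strong connectivity $\xnet^*=\ones_n\otimes x^*$ and, by Proposition~\ref{prop:equiv-F}, $\xnet^*$ solves~\eqref{eq:dis_opt_2} and $(\xnet^*,\znet^*)$ is a saddle point of $F$. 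As in Theorem~\ref{theorem:dis_opt}, the set $W_{\znet_0}=\{(\xnet,\znet)\mid(\ones_n^T\otimes\identity{d})\znet=(\ones_n^T\otimes\identity{d})\znet_0\}$ is strongly positively invariant, since weight-balancedness gives $\ones_n^T\Lap=0$. The key preparatory step is to use the freedom in Proposition~\ref{prop:equiv-F}(i) to choose the equilibrium representative $\znet^*$ so that $(\ones_n^T\otimes\identity{d})\znet^*=(\ones_n^T\otimes\identity{d})\znet_0$; then along any trajectory starting in $W_{\znet_0}$ the deviation $\znet-\znet^*$ has zero agreement component, which is what ultimately lets us control the single direction the Laplacian cannot see.

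Next I would pass to $(\xnet,\ynet)=(\xnet,\beta\xnet+\znet)$ with $\alpha=\frac{\beta^2+2}{\beta}$ and use $\Lyap(\xnet,\ynet)=\frac12(\xnet-\xnet^*)^T(\xnet-\xnet^*)+\frac12(\ynet-\ynet^*)^T(\ynet-\ynet^*)$, where $\ynet^*=\beta\xnet^*+\znet^*$. Writing $G=\gradient\fLift(\xnet)-\gradient\fLift(\xnet^*)$ and differentiating along~\eqref{eq:CT-Laplacian-optimization-NEW}, the derivative splits into a graph part, a quadratic form in $(\xnet-\xnet^*,\ynet-\ynet^*)$ built from $\Blap+\Blap^T\succeq0$ exactly as in Lemma~\ref{lemma:necessary_linear_directed-NEW}, and a nonlinear part $-(\xnet-\xnet^*)^TG-\beta(\ynet-\ynet^*)^TG$. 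Substituting $\ynet-\ynet^*=\beta(\xnet-\xnet^*)+(\znet-\znet^*)$ rewrites the nonlinear part as $-(1+\beta^2)(\xnet-\xnet^*)^TG-\beta(\znet-\znet^*)^TG$, and since each $f^i$ is convex with $K$-Lipschitz gradient, Proposition~\ref{prop:coco_nes_suff} makes $\fLift$ be $\frac1K$-cocoercive, giving $(\xnet-\xnet^*)^TG\ge\frac1K\|G\|^2$.

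The crux is to show $\dot\Lyap\le0$ on $W_{\znet_0}$, for which I would split the state into its agreement component and its orthogonal complement. On the agreement subspace the graph form vanishes, $\znet-\znet^*$ contributes nothing by the choice above, and the nonlinear part collapses to $-(1+\beta^2)(x-x^*)^T(\gradient f(x)-\gradient f(x^*))\le0$ by convexity of $f$; this is the step that would fail without the equilibrium-selection trick, since otherwise the agreement component of $\ynet-\ynet^*$ is free and can drive $\dot\Lyap$ positive. On the orthogonal complement $\Blap+\Blap^T$ is positive definite with smallest eigenvalue $\smallnonzeroeig(\Lap+\Lap^T)$, and combining the graph form with cocoercivity and the Lipschitz bound $\|G\|\le K\|\xnet-\xnet^*\|$ reduces the claim to the negative semidefiniteness of a small matrix whose entries depend on $\beta$, $K$ and $\smallnonzeroeig(\Lap+\Lap^T)$. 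Worst-casing over the spectrum of $\Lap+\Lap^T$ (the binding case being the smallest nonzero eigenvalue), its dominant eigenvalue is $\frac12\smallnonzeroeig(\Lap+\Lap^T)\big(-\tfrac{\beta^4+3\beta^2+2}{\beta}+\sqrt{(\tfrac{\beta^4+3\beta^2+2}{\beta})^2-4}\big)$ shifted by the gradient's worst-case contribution $\frac{K\beta^2}{1+\beta^2}$, i.e.\ $h(\beta)$, so that $h(\beta)\le0$ is precisely what yields $\dot\Lyap\le0$. Existence of $\beta^*$ with $h(\beta^*)=0$ and $h<0$ on $(0,\beta^*)$ then follows from continuity of $h$ together with $h(\beta)<0$ for small $\beta$ and $h(\beta)\to K>0$ as $\beta\to\infty$. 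I expect this matrix/eigenvalue estimate, and matching its worst case to $h$ exactly, to be the main obstacle.

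Finally, with $\dot\Lyap\le0$ and bounded trajectories, the set-valued LaSalle Invariance Principle, Theorem~\ref{th:laSalle}, gives convergence to the largest weakly invariant set $M$ in $\{(\xnet,\ynet)\mid 0\in\SetLie_{\modoptdyn}\Lyap(\xnet,\ynet)\}\cap W_{\znet_0}$. I would then characterize $M$ as in Theorem~\ref{theorem:dis_opt}: $\dot\Lyap=0$ forces the orthogonal components to vanish, so on $M$ the states are in agreement, $\xnet=\ones_n\otimes a$ with $n\dot a=-\sum_{i=1}^n\gradient f^i(a)$; the same reasoning as in the proof of Theorem~\ref{theorem:dis_opt} then forces $\dot a=0$, whence $M\subseteq\equilibria{\modoptdyn}$ and the first component of every trajectory converges to the solution set of~\eqref{eq:dis_opt_2}. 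If $f$ has finitely many critical points, $M$ is a finite collection of points and the last statement of Theorem~\ref{th:laSalle} yields convergence to a single solution.
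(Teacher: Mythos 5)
Your proposal follows essentially the same route as the paper's proof: the same choice of the equilibrium representative inside $W_{\znet_0}$, the same change of variables $\ynet=\beta\xnet+\znet$ with $\beta^2-\alpha\beta+2=0$ and the same quadratic Lyapunov function, cocoercivity (Proposition~\ref{prop:coco_nes_suff}) to control the gradient terms, the eigenvalue bound $h(\beta)$ with the existence of $\beta^*$ from the asymptotics of $h$, and the same LaSalle/invariance endgame. The only caveat is that your intermediate description of splitting the nonlinear term over the agreement and orthogonal subspaces is not literally valid as a termwise decomposition (since $\gradient\fLift(\xnet)-\gradient\fLift(\xnet^*)$ couples the two components), but your final reduction---treating the gradient difference as an additional block variable and checking negative semidefiniteness of a small matrix whose worst-case nonzero eigenvalue is exactly $h(\beta)$---is precisely the mechanism the paper uses.
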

  
\begin{proof}
  For convenience, we denote the
  dynamics~\eqref{eq:CT-Laplacian-optimization-NEW} by $
  \map{\modoptdyn}{\real^{n d} \times \real^{n d}}{\real^{n d} \times
    \real^{n d}} $. Note that the equilibria of $\modoptdyn $ are
  precisely the set of saddle points of $F$ in~\eqref{eq:F_opt}.  Let
  $\xnet^* = \ones_n \otimes x^*$ be a solution
  of~\eqref{eq:dis_opt_2}.  First, note that given any initial
  condition $ (\xnet_0,\znet_0) \in \real^{nd} \times \real^{nd} $,
  the set $ W_{\znet_0} $ defined by~\eqref{eq:S_z0}
  is invariant under the evolutions
  of~\eqref{eq:CT-Laplacian-optimization-NEW}. By
  Proposition~\ref{prop:equiv-F}(i) and~(iii), there exists $
  (\xnet^*,\znet^*) \in \equilibria{\modoptdyn}\cap W_{\znet_0} $.
  Consider the function $ \map{V}{\real^{nd} \times
    \real^{nd}}{\real_{\geq0}} $,
  \[
  \Lyap(\xnet,\znet) = \frac{1}{2}(\xnet-\xnet^*)^T(\xnet-\xnet^*) +
  \frac{1}{2}(\ynet_{(\xnet,\znet)}-\ynet_{(\xnet^*,\znet^*)})^T
  (\ynet_{(\xnet,\znet)}-\ynet_{(\xnet^*,\znet^*)}),
  \]
  where $ \ynet_{(\xnet,\znet)}=\beta\xnet+\znet $ and $ \beta \in
  \realpositive $ satisfies $ \beta^2-\alpha \beta+2=0 $.  This
  function is quadratic, hence smooth.  Next, we consider its Lie
  derivative along $\modoptdyn$ on $W_{\znet_0}$. For $(\xnet,\znet)
  \in W_{\znet_0}$, let
  \begin{align*}
    \xi &= \Lie_{\modoptdyn}V(\xnet,\znet)= (-\alpha \Blap\xnet-\Blap
    \znet -\gradient \fLift(\xnet), \Blap\xnet)^T \nabla
    V(\xnet,\znet)
    \\
    &=\frac{1}{2}\begin{pmatrix} (\xnet-\xnet^*)^T,
      (\ynet_{(\xnet,\znet)}-\ynet_{(\xnet^*,\znet^*)})^T \end{pmatrix}
    \Alin \begin{pmatrix} \xnet,
      \ynet_{(\xnet,\znet)} \end{pmatrix}^T-(\xnet-\xnet^*)^T\gradient
    \fLift(\xnet)
    \\
    & \quad +\frac{1}{2}\begin{pmatrix} \xnet^T,
      \ynet_{(\xnet,\znet)}^T \end{pmatrix} \Alin^T
    \begin{pmatrix}
      \xnet-\xnet^*,     
      \ynet_{(\xnet,\znet)}-\ynet_{(\xnet^*,\znet^*)} 
    \end{pmatrix}^T
    -\beta(\ynet_{(\xnet,\znet)}-\ynet_{(\xnet^*,\znet^*)})^T\gradient
    \fLift(\xnet),
  \end{align*}
  where $ \Alin $ is given by~\eqref{eq:A_linear}. This equation can
  be written as
  \begin{align*}
    \xi =&\frac{1}{2}
    \begin{pmatrix}
      (\xnet-\xnet^*)^T,
      (\ynet_{(\xnet,\znet)}-\ynet_{(\xnet^*,\znet^*)})^T
    \end{pmatrix}
    Q
    \begin{pmatrix}
      \xnet-\xnet^*,
      \ynet_{(\xnet,\znet)}-\ynet_{(\xnet^*,\znet^*)}
    \end{pmatrix}^T - (\xnet-\xnet^*)^T\gradient \fLift(\xnet)
    \\
    &+\begin{pmatrix} (\xnet-\xnet^*)^T,
      (\ynet_{(\xnet,\znet)}-\ynet_{(\xnet^*,\znet^*)})^T \end{pmatrix}
    \Alin \begin{pmatrix} \xnet^*,
      \ynet_{(\xnet^*,\znet^*)} \end{pmatrix}^T-
    \beta(\ynet_{(\xnet,\znet)}-\ynet_{(\xnet^*,\znet^*)})^T\gradient
    \fLift(\xnet),
  \end{align*}
  where $ Q $ is given by~\eqref{eq:Q_with_y}. Note that $ \Alin (
  \xnet^*, \ynet_{(\xnet^*,\znet^*)} )^T = -(
  \Blap\ynet_{(\xnet^*,\znet^*)}, \beta\Blap\ynet_{(\xnet^*,\znet^*)}
  )^T = ( \gradient \fLift(\xnet^*), \beta\gradient
  \fLift(\xnet^*))^T$.  Thus,
  after substituting for $ \ynet_{(\xnet,\znet)} $, we have
  \begin{align}\label{eq:z-issue}
    \xi =&\frac{1}{2}\begin{pmatrix} (\xnet-\xnet^*)^T,      
      (\znet-\znet^*)^T \end{pmatrix}^T \tilde{Q}
    \begin{pmatrix} \xnet-\xnet^*, \znet-\znet^* \end{pmatrix}^T
    \nonumber
    \\
    &-(1+\beta^2)(\xnet-\xnet^*)^T(\gradient \fLift(\xnet)-\gradient
    \fLift(\xnet^*)) -\beta(\znet-\znet^*)^T(\gradient \fLift(\xnet) -
    \gradient \fLift(\xnet^*)),
  \end{align}
  where
  \begin{align*}
    \tilde{Q}=\begin{pmatrix}
      -\beta^3-(\frac{\beta^2+2}{\beta})-\beta & -(1+\beta^2)
      \\
      -(1+\beta^2) & -\beta
      \\
    \end{pmatrix}\otimes (\Blap+\Blap^T).
  \end{align*}
  Each eigenvalue of $ \tilde{Q} $ is of the form
  \begin{align}\label{eq:eigenvalues-tildeQ}
    \tilde{\eta} = \lambda\times \frac{-(\beta^4+3\beta^2+2) \pm
      \sqrt{(\beta^4+3\beta^2+2)^2-4\beta^2}}{2\beta},
  \end{align}
  where $ \lambda $ is an eigenvalue of $ \Lap+\Lap^T $.  Using the
  cocoercivity of $ \fLift $, we can upper bound $\xi$ as,
  \begin{align}\label{eq:one-more}
    \xi \leq \frac{1}{2} \begin{pmatrix} \xnet-\xnet^*
      \\
      \znet-\znet^*\\
      \gradient \fLift(\xnet)-\gradient \fLift(\xnet^*)
      \end{pmatrix}^T
      \underbrace{\begin{pmatrix} \tilde{Q}_{11} & \tilde{Q}_{12} & 0\\
          \tilde{Q}_{21} &\tilde{Q}_{22} & -\beta \identity{nd}\\
          0 & -\beta \identity{nd} & -\tfrac{1}{K} (1+\beta^2) \identity{nd}
        \end{pmatrix}}_{\mathbf{Q}}
      \begin{pmatrix}
        \xnet-\xnet^*
        \\
        \znet-\znet^*
        \\
        \gradient \fLift(\xnet)-\gradient \fLift(\xnet^*)
      \end{pmatrix} ,
  \end{align}
  where $ K \in \realpositive $ is the Lipschitz constant for the
  gradient of $ \fLift $. 

  Since $ (\xnet, \znet) \in W_{\znet_0} $, we have $(\ones_{n}^T
  \otimes \identity{d}) (\znet - \znet^*) =\zeros_{d}$ and hence it is
  enough to establish that~$\mathbf{Q}$ is negative semidefinite on
  the subspace $\WW = \setdef{(v_1,v_2,v_3) \in
    (\real^{nd})^3}{(\ones_{n}^T \otimes \identity{d}) v_2 =
    \zeros_d}$.  Using the fact that $-\tfrac{1}{K} (1+\beta^2)
  \identity{nd} $ is invertible, we can express $\mathbf{Q}$ as
  \begin{align*}
    \mathbf{Q} = N
    \begin{pmatrix}
      \bar{Q} & 0
      \\
      0 & -\tfrac{1}{K} (1+\beta^2) \identity{nd}
    \end{pmatrix}
    N^T
    , \;
    \bar{Q}=\tilde{Q}+\frac{K\beta^2}{(1+\beta^2)}
    \left(\begin{matrix}
        0 & 0\\
        0 & \identity{nd}
      \end{matrix}\right)
    , \;
    N =
    \begin{pmatrix}
      \identity{nd} & 0 & 0
      \\
      0 & \identity{nd} & \frac{\beta K}{1+\beta^2} \identity{nd}
      \\
      0 & 0 & \identity{nd}
    \end{pmatrix}
    .
  \end{align*}
  Noting that $\WW$ is invariant under $N^T$ (i.e., $N^T \WW = \WW$),
  all we need to check is that the matrix $\left(
    \begin{smallmatrix}
      \bar{Q} & 0
      \\
      0 & -\tfrac{1}{K} (1+\beta^2) \identity{nd}
    \end{smallmatrix} \right) $ is negative semidefinite on
  $\WW$. Clearly, $ -\tfrac{1}{K} (1+\beta^2) \identity{nd}$ is
  negative definite. On the other hand, on $(\real^{nd})^2$, $0$ is an
  eigenvalue of $\tilde{Q}$ with multiplicity $2d$ and eigenspace
  generated by vectors of the form $(\ones_n \otimes a,0)$ and
  $(0,\ones_n \otimes b)$, with $a,b \in \real^d$. However, on $
  \setdef{(v_1,v_2) \in (\real^{nd})^2}{(\ones_{n}^T \otimes
    \identity{d}) v_2 = \zeros_d}$, $0$ is an eigenvalue of
  $\tilde{Q}$ with multiplicity $d$ and eigenspace generated by
  vectors of the form $(\ones_n \otimes a,0)$. Moreover, on $
  \setdef{(v_1,v_2) \in (\real^{nd})^2}{(\ones_{n}^T \otimes
    \identity{d}) v_2 = \zeros_d}$, the eigenvalues of
  $\frac{K\beta^2}{(1+\beta^2)} \left(\begin{smallmatrix}
      0 & 0\\
      0 & \identity{nd}
    \end{smallmatrix}\right)
  $ are $\frac{K\beta^2}{(1+\beta^2)}$ with multiplicity $nd-d$ and
  $0$ with multiplicity $nd$. Therefore, using Weyl's
  theorem~\cite[Theorem 4.3.7]{RAH-CRJ:85}, we deduce that the nonzero
  eigenvalues of the sum $ \bar{Q} $ are upper bounded by $
  \smallnonzeroeig(\tilde{Q})+\frac{K\beta^2}{(1+\beta^2)}
  $. From~\eqref{eq:eigenvalues-tildeQ} and the definition of $h$
  in~\eqref{eq:h}, we conclude that the nonzero eigenvalues of~$
  \bar{Q} $ are upper bounded by $ h(\beta) $.  It remains to show
  that there exists $ \beta^* \in \realpositive $ with $ h(\beta^*)=0
  $ such that for all $ 0 < \beta < \beta^* $ we have $ h(\beta) < 0
  $.  For $ r>0 $ small enough, $h(r)<0$, since $ h(r) =
  -\frac{1}{2}\smallnonzeroeig(\Lap+\Lap^T)r+O(r^2)$. Furthermore, $
  \lim_{r\rightarrow \infty} h(r) =K > 0 $. Hence, the existence of $
  \beta^* $ follows from the Mean Value Theorem.  Therefore we
  conclude $ \Lie_{\modoptdyn}V (\xnet,\znet) \leq 0$.  As a
  by-product, the trajectories
  of~\eqref{eq:CT-Laplacian-optimization-NEW} are
  bounded. Consequently, all assumptions of the LaSalle Invariance
  Principle are satisfied and its application yields that any
  trajectory of~\eqref{eq:CT-Laplacian-optimization-NEW} starting from
  an initial condition $ (\xnet_0,\znet_0) $ converges to the largest
  positively invariant set $M$ in $ S_{\modoptdyn,\Lyap} \cap
  W_{\znet_0} $.  Note that if $(\xnet,\znet) \in S_{\modoptdyn,\Lyap}
  \cap W_{\znet_0} $, then $ N^T \left(
    \begin{smallmatrix}
      \xnet-\xnet^*
      \\
      \znet-\znet^*
      \\
      \gradient \fLift(\xnet)-\gradient \fLift(\xnet^*)
    \end{smallmatrix}
  \right) \in \ker (\bar{Q}) \times \{0\} $.  From the discussion
  above, we know $\ker (\bar{Q})$ is generated by vectors of the form
  $(\ones_n \otimes a,0)$, and hence this implies that $\xnet =
  \xnet^* + \ones_n \otimes a$, $\znet = \znet^*$, and $ \gradient
  \fLift(\xnet)=\gradient \fLift(\xnet^*)$, from where we deduce that
  $\xnet$ is also a solution to~\eqref{eq:dis_opt_2}.  Finally, for
  $(\xnet,\znet) \in M$, an argument similar to the one in the proof
  of Theorem~\ref{theorem:dis_opt} establishes $ (\xnet, \znet) \in
  \equilibria{\modoptdyn} $. If the set of equilibria is finite,
  convergence to a point is also guaranteed.
\end{proof}

Figure~\ref{fig:sim} illustrates the result of
Theorem~\ref{theorem:dis_opt-directed} for the network of
Example~\ref{example:counter}.

{ 
\psfrag{0}[][rr]{{}}
\psfrag{1}[rr][rr]{{\tiny $1$}}
\psfrag{3}[rr][rr]{{\tiny $3$}}

\psfrag{-3}[rr][rr]{{\tiny $-3$}}
\psfrag{-5}[rr][rr]{{\tiny $-5$}}

\psfrag{-1}[rr][rr]{{\tiny $-1$}}
\psfrag{5}[rr][rr]{{\tiny $5$}}
\psfrag{15}[rr][rr]{{\tiny $15$}}
\psfrag{25}[rr][rr]{{\tiny $25$}}
\psfrag{35}[rr][rr]{{\tiny $35$}}
\psfrag{40}[rr][rr]{{\tiny $40$}}
\psfrag{45}[rr][rr]{{\tiny $45$}}

\psfrag{10}[rr][rr]{{\tiny $10$}}
\psfrag{20}[rr][rr]{{\tiny $20$}}
\psfrag{30}[rr][rr]{{\tiny $30$}}

\psfrag{-0.5}[rr][rr]{{}}
\psfrag{0.5}[rr][rr]{{}}
\psfrag{1.5}[rr][rr]{{}}
\psfrag{4}[rr][rr]{{}}
\psfrag{2}[rr][rr]{{}}
\psfrag{-2}[rr][rr]{{}}
\psfrag{-4}[rr][rr]{{}}

\begin{figure}[htb]
  \centering
  {\psfrag{2}[rr][rr]{{\tiny $2$}}
  \subfigure[]{\includegraphics[width=.31\linewidth]{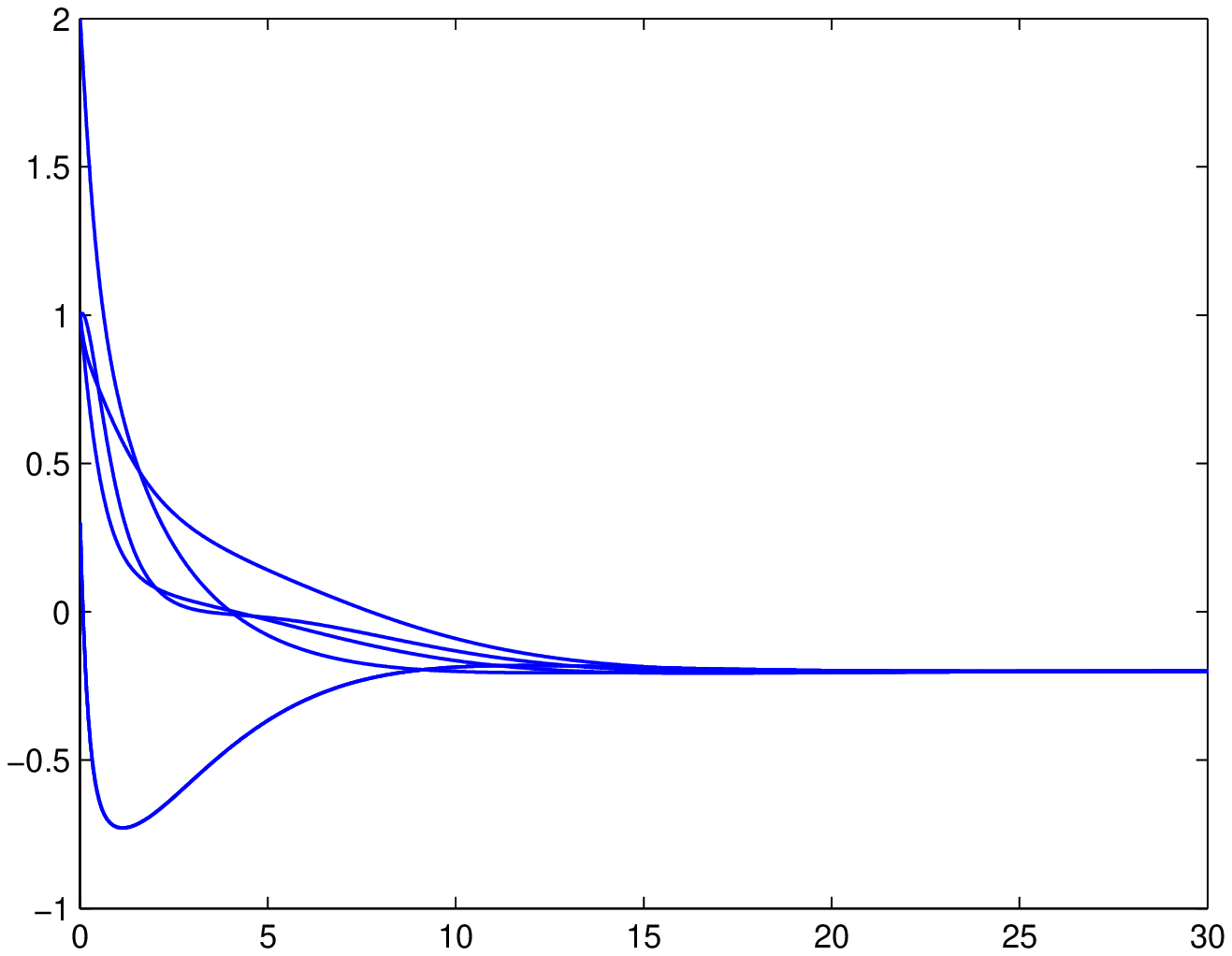}}
  }
  \subfigure[]{\includegraphics[width=.31\linewidth]{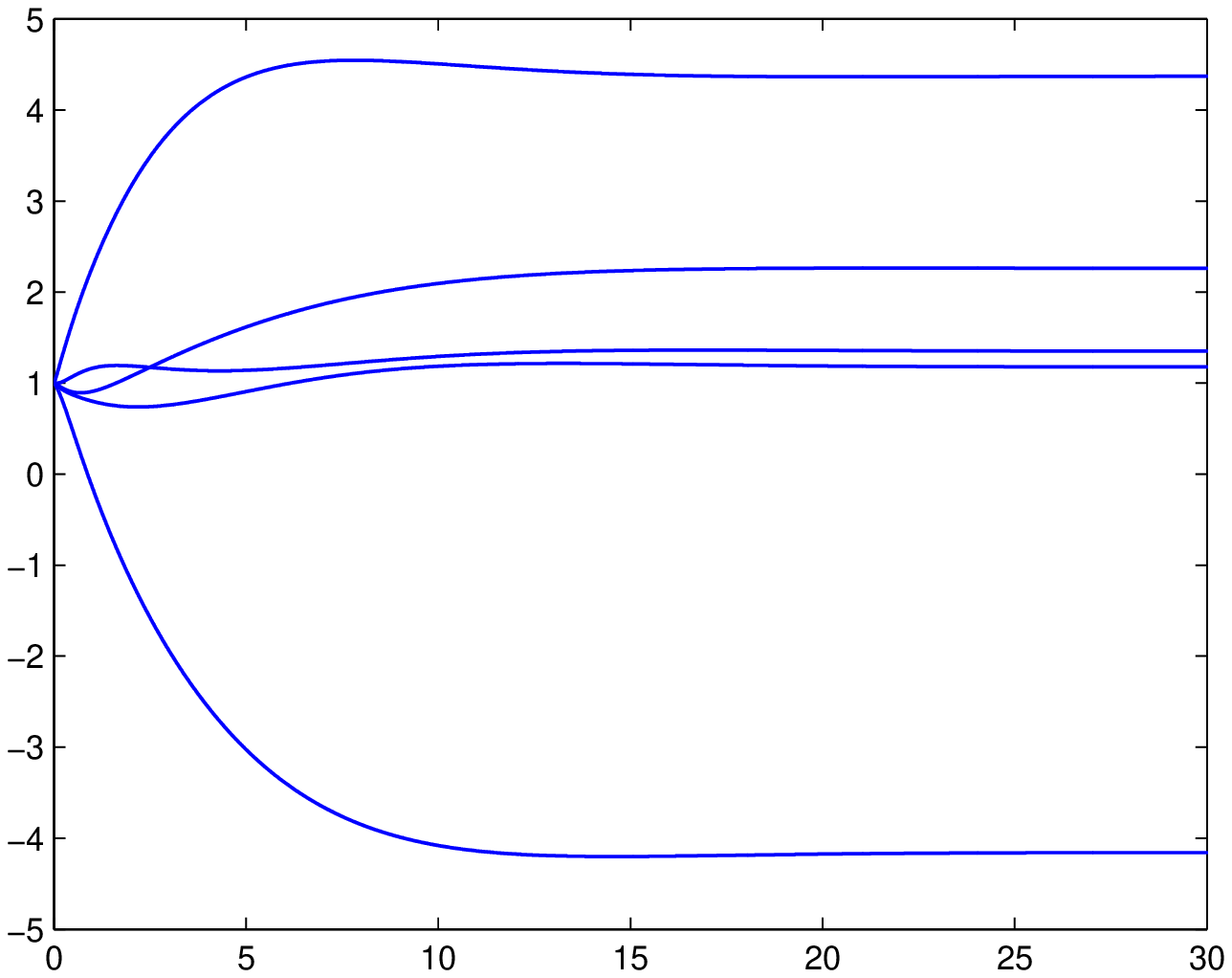}}
  \subfigure[]{\includegraphics[width=.31\linewidth]{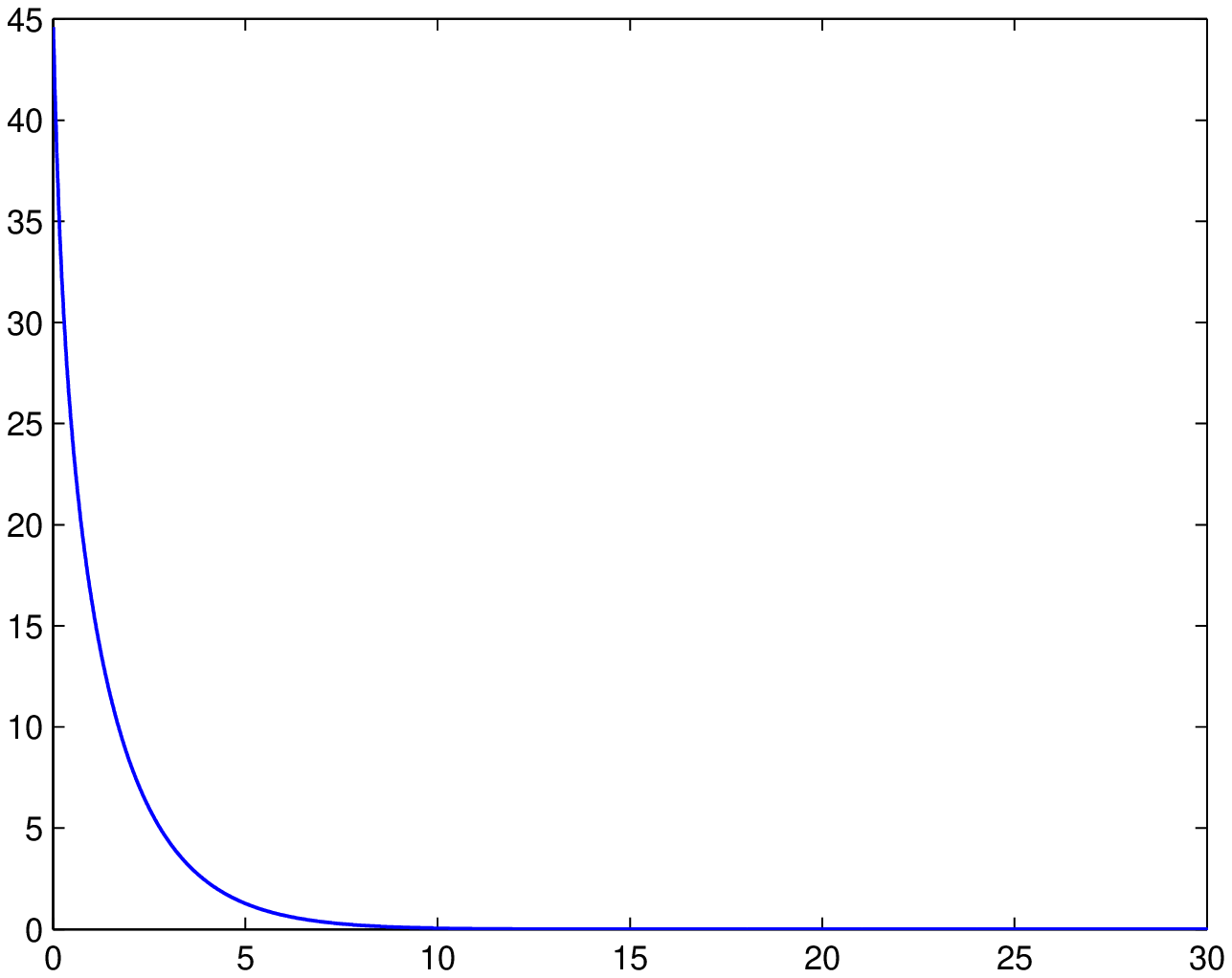}}
  \vspace*{-.5ex}
  \caption{Execution of~\eqref{eq:CT-Laplacian-optimization-NEW} for
    the network of Example~\ref{example:counter} with $ f^1(x)=e^x $,
    $ f^2(x)=(x-3)^2 $, $ f^3(x)=(x+3)^2 $, $ f^4(x)=x^4 $, $ f^5(x)=4
    $. (a) and (b) show the evolution of the agent's values in $ x $
    and $ z $, respectively, and (c) shows the value of the Lyapunov
    function.  Here, $ \alpha =3 $, $ \xnet_0=(1,2,0.3,1,1)^T $, and $
    \znet_0=\ones_5 $.  The equilibrium $ (\xnet^*,\znet^*) $ is $
    \xnet^*=-0.2005\cdot\ones_5 $ and $ \znet^*= (1.1784, 4.3717,
    -4.1598, 2.2598, 1.3499)^T $.}\label{fig:sim}
  \vspace*{-3ex}
\end{figure}
}

\begin{remark}[Locally Lipschitz objective
  functions]\label{remark:diff_assumption}
  Our simulations suggests that the convergence result in
  Theorem~\ref{theorem:dis_opt-directed} holds true for any locally
  Lipschitz objective function.  However, our proof cannot be
  reproduced for this case because it would rely on the generalized
  gradient being globally Lipschitz which, by
  Proposition~\ref{prop:coco_diff}, would imply that the function is
  differentiable.
  \oprocend
\end{remark}

\begin{remark}[Selection of $ \alpha $
  in~\eqref{eq:CT-Laplacian-optimization-NEW}]
  According to Theorem~\ref{theorem:dis_opt-directed}, the parameter
  $\alpha$ is determined by $ \beta $ as $\alpha =
  \frac{\beta^2+2}{\beta} $. In turn, one can observe
  from~\eqref{eq:h} that the range of suitable values for $\beta$
  increases with higher network connectivity and smaller variability
  of the gradient of the objective function.  From a control design
  viewpoint, it is reasonable to choose the value of $\beta$ that
  yields the smallest $\alpha$ while satisfying the conditions of the
  theorem statement.  
  \oprocend
\end{remark}

\begin{remark}[Discrete-time counterpart
  of~\eqref{eq:CT-Laplacian-optimization}
  and~\eqref{eq:CT-Laplacian-optimization-NEW}]
  It is worth noticing that the discretization
  of~\eqref{eq:CT-Laplacian-optimization} for undirected graphs
  (performed in~\cite{JW-NE:10} for the case of continuously
  differentiable, strictly convex functions)
  and~\eqref{eq:CT-Laplacian-optimization-NEW} for weight-balanced
  digraphs gives rise to different discrete-time optimization
  algorithms from the ones considered
  in~\cite{MR-RN:04,AN-AO:09,PW-MDL:09,AN-AO-PAP:10,BJ-MR-MJ:09,MZ-SM:12}.
  \oprocend
\end{remark}

\vspace{-.75ex}
\section{Conclusions and future work}\label{sec:conclusions}

We have studied the distributed optimization of a sum of convex
functions over directed networks using consensus-based dynamics.
Somewhat surprisingly, we have established that the convergence
results established in the literature for undirected networks do not
carry over to the directed scenario. Nevertheless, our analysis has
allowed us to introduce a slight generalization of the saddle-point
dynamics of the undirected case which incorporates a design
parameter. We have proved that, for appropriate parameter choices,
this dynamics solves the distributed optimization problem for
differentiable convex functions with globally Lipschitz gradients on
strongly connected and weight-balanced digraphs. Our technical
approach relies on a careful combination of notions from stability
analysis, algebraic graph theory, and convex analysis.
Future work will focus on the extension of the convergence results to
locally Lipschitz functions in the weight-balanced directed case and
to general digraphs, the incorporation of local and global
constraints, the design of distributed algorithms that allow the
network to agree on an optimal value of the design parameter, the
discretization of the algorithms, and the study of the potential
connections with  dynamic consensus strategies.


\vspace*{-2ex}

\vspace*{-2ex}

\appendix

\section{Appendix}\label{sec:appendix}

The next result shows that the differentiability hypothesis of
Proposition~\ref{prop:coco_nes_suff} cannot be relaxed.

\begin{proposition}[Lipschitz generalized gradient and
  differentiability]\label{prop:coco_diff}
  Any locally Lipschitz function with globally Lipschitz generalized
  gradient is differentiable.
\end{proposition}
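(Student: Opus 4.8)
The plan is to take the hypothesis ``globally Lipschitz generalized gradient'' in the same pointwise, all-selections sense used to define cocoercivity earlier in the paper; namely, there exists $ K \in \realpositive $ such that $ \|g_x - g_{x'}\| \le K\|x - x'\| $ for all $ x, x' \in \real^d $ and all $ g_x \in \partial f(x) $, $ g_{x'} \in \partial f(x') $. Under this reading the statement follows from a single observation, so the real work is to extract pointwise differentiability from the resulting structure.

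First I would specialize the hypothesis to $ x' = x $. Then for any two elements $ g_x, g'_x \in \partial f(x) $ we obtain $ \|g_x - g'_x\| \le K\|x - x\| = 0 $, hence $ g_x = g'_x $. Therefore $ \partial f(x) $ reduces to a single vector, say $ \partial f(x) = \{\nabla f(x)\} $, for every $ x \in \real^d $. This is precisely the step where it is essential that the Lipschitz bound quantifies over all subgradients rather than over the Hausdorff distance between the sets: the weaker set-distance reading does not suffice, as one can build a locally Lipschitz $ f $ on $ \real $ whose generalized gradient is the constant (hence trivially Lipschitz) set $ [0,1] $ while $ f $ fails to be differentiable on a nonempty set. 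The surviving inequality $ \|\nabla f(x) - \nabla f(x')\| \le K\|x - x'\| $ moreover shows that this single-valued selection is itself globally Lipschitz.

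To conclude, I would invoke the standard characterization that, for a locally Lipschitz function, $ \partial f(x) $ is a singleton $ \{\zeta\} $ if and only if $ f $ is strictly differentiable at $ x $ with $ \nabla f(x) = \zeta $, see~\cite[Proposition~2.2.4]{FHC:83}. Since strict differentiability implies Fr\'echet differentiability and the singleton property holds at every $ x \in \real^d $ by the previous step, $ f $ is differentiable everywhere, as claimed. I expect the only genuine obstacle to be the interpretational one in the first paragraph: committing to the all-selections reading of ``globally Lipschitz generalized gradient,'' which is exactly what forces $ \partial f $ to collapse to a singleton. Once that reading is fixed, both the collapse at $ x' = x $ and the appeal to~\cite{FHC:83} are routine.
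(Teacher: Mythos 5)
Your argument is correct under the all-selections reading of the hypothesis, but it is a genuinely different route from the paper's. The paper never sets $x'=x$: it treats the hypothesis as the set-valued containment $\partial f(x)\subset \partial f(y)+K\|x-y\|B(0,1)$, invokes the almost-everywhere differentiability of locally Lipschitz functions to choose a sequence $x_n\to x$ of differentiability points, writes every $v\in\partial f(x)$ as $\nabla f(x_n)+K\|x_n-x\|u_n$ with $u_n$ a unit vector, and passes to the limit to conclude that all elements of $\partial f(x)$ equal $\lim_{n\to\infty}\nabla f(x_n)$; both proofs then finish identically, via the fact that a locally Lipschitz function with singleton generalized gradient at $x$ is (strictly) differentiable there, cf.~\cite[Proposition~2.2.4]{FHC:83}. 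Your version is shorter and needs no appeal to Rademacher-type results, at the price of assuming the stronger pointwise bound over all pairs of subgradients; that reading is the defensible one here, since the proposition is invoked in Remark~\ref{remark:diff_assumption} in connection with cocoercivity, which the paper itself defines by quantifying over all selections $g_x\in\partial f(x)$, $g_{x'}\in\partial f(x')$. Your aside about the constant set-valued gradient is worth keeping: a function such as $f(x)=\int_0^x(\chi_A-\chi_{A^c})$, with $A\subset\real$ meeting every interval and its complement in positive measure, has $\partial f\equiv[-1,1]$ yet is not everywhere differentiable, so the statement genuinely fails under the Hausdorff-distance reading; the same example also shows why the paper's substitution of $\{\nabla f(x_n)\}$ for $\partial f(x_n)$ at the differentiability points is the one step of its proof that silently requires the Lipschitz hypothesis to be taken in a sufficiently strong, selection-wise sense.
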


\begin{proof}
  Let $ \map{f}{\real^d}{\real} $ be a locally Lipschitz function and
  has a globally Lipschitz generalized gradient map~\cite{
    JC:08-csm-yo}.  Take $ x\in \real^d $ and let us show that
  $ \partial f(x) $ is a singleton.  Since~$f$ is differentiable
  almost everywhere, there exists a sequence of points $
  \{x_n\}_{n=1}^{\infty} $, where $ f $ is differentiable such that $
  \lim_{n \rightarrow \infty} x_n = x $.  Using the set-valued
  Lipschitz property of $ \partial f $, we have $
  \partial f (x) \subset \gradient f (x_n) + K || x_n-x|| B(0,1), $
  where $ K \in \realpositive $ is the Lipschitz constant and $ B(0,1)
  $ is the ball centered at $ 0 \in \real^d $ of radius one.  Hence,
  any element $ v \in \partial f(x) $ can be written as $ v= \gradient
  f (x_n) + K ||x_n-x|| u_n $, where $ u_n $ is a unit vector in $
  \real^d $.  Now, taking the limit, $ v = \lim_{n\rightarrow \infty}
  \gradient f (x_n) $.  Hence the generalized gradient is
  singleton-valued.  Differentiability follows now from the set-valued
  Lipschitz condition.
\end{proof}

\begin{lemma}[Generalized gradient flow from a critical
  point]\label{lemma:flow-in-critical}
  Let $ \map{f}{\real^d}{\real} $ be locally Lipschitz and convex, and
  let $x^*$ be a minimizer of $f$. Then, the only solution of
  $\dot{x}(t)\in -\partial f(x(t))$ starting from $x^* $ is $ x(t)=x^*
  $, for all $ t \geq 0 $.
\end{lemma}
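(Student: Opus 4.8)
The plan is to use the quadratic Lyapunov function $\Lyap(x) = \frac{1}{2}(x-x^*)^T(x-x^*)$ measuring the squared distance to the minimizer, and to show that it cannot increase along any solution of the differential inclusion. Since $\Lyap$ is nonnegative and vanishes exactly at $x^*$, once I establish that $\Lyap$ is nonincreasing along a solution issuing from $x^*$, the value $\Lyap(x(t))$ must remain identically zero, which forces $x(t)=x^*$ for all $t \geq 0$. Existence of at least one solution from $x^*$ is guaranteed by Lemmas~\ref{le:gradient_properties} and~\ref{le:solution}, since $-\partial f$ is upper semicontinuous with nonempty, compact, and convex values.

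First I would fix an arbitrary solution $x(t)$ with $x(0)=x^*$. Because $\Lyap$ is smooth and $x(\cdot)$ is absolutely continuous, the composition $t \mapsto \Lyap(x(t))$ is absolutely continuous and, for almost every $t$, satisfies $\frac{d}{dt}\Lyap(x(t)) = (x(t)-x^*)^T\dot{x}(t)$. At each such $t$, the inclusion $\dot{x}(t)\in-\partial f(x(t))$ gives $\dot{x}(t)=-g(t)$ for some $g(t)\in\partial f(x(t))$, whence $\frac{d}{dt}\Lyap(x(t)) = -(x(t)-x^*)^Tg(t)$.

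The key step is to bound this derivative via convexity. Applying the first-order condition~\eqref{eq:convex_prop} with the subgradient $g(t)\in\partial f(x(t))$ at the pair $(x(t),x^*)$ yields $f(x^*)-f(x(t))\geq g(t)^T(x^*-x(t))$, i.e., $(x(t)-x^*)^Tg(t)\geq f(x(t))-f(x^*)$. Since $x^*$ is a minimizer of $f$, the right-hand side is nonnegative, and therefore $\frac{d}{dt}\Lyap(x(t)) \leq -(f(x(t))-f(x^*)) \leq 0$ for almost every $t$.

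Finally, integrating this inequality from $0$ to an arbitrary $t>0$ and using $\Lyap(x(0))=\Lyap(x^*)=0$ gives $\Lyap(x(t))\leq 0$; combined with $\Lyap\geq 0$ this forces $\Lyap(x(t))=0$, hence $x(t)=x^*$. As the reasoning applies to every solution emanating from $x^*$, the claim follows. I do not expect a genuine obstacle here; the only point requiring minor care is the almost-everywhere chain rule for the smooth function $\Lyap$ composed with the absolutely continuous trajectory, which is standard for differential inclusions of this form.
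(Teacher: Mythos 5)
Your proof is correct, and it takes a genuinely different route from the paper's. The paper argues by contradiction: it uses monotonicity of $f$ along the flow to conclude the trajectory stays in the set of minimizers, picks a time $t'$ where $\dot x(t')=v\neq 0$, and invokes the nonsmooth chain rule of~\cite{AB-FC:99} applied to $f$ itself to get $0=v^T\xi$ for all $\xi\in\partial f$, reaching a contradiction with $\xi=-v$. You instead run a direct Lyapunov argument on the \emph{smooth} function $\Lyap(x)=\frac{1}{2}\|x-x^*\|^2$: the first-order convexity inequality~\eqref{eq:convex_prop} together with $f(x(t))\geq f(x^*)$ gives $\frac{d}{dt}\Lyap(x(t))\leq 0$ a.e., and integrating from $\Lyap(x(0))=0$ pins the trajectory at $x^*$. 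Your version buys two things: it only needs the elementary chain rule for a $C^1$ function composed with an absolutely continuous curve (no appeal to a nonsmooth chain rule or to the decrease of $f$ along the flow, which itself requires justification), and it sidesteps the paper's somewhat delicate ``smallest time $t'$ with $\dot x(t')\neq 0$'' step, since the set of such times need not have a minimum. As a by-product you actually establish the stronger Fej\'er-type monotonicity that $\|x(t)-x^*\|$ is nonincreasing along any solution of the subgradient flow, for any minimizer $x^*$; the paper's contradiction argument yields only the stated rigidity at $x^*$. The one point you flag---the a.e.\ chain rule for $\Lyap\circ x$---is indeed standard and unproblematic here.
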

\begin{proof}
  We reason by contradiction. Assume $x(t)$ is not identically
  $x^*$. Since $f$ is monotonically nonincreasing along the gradient
  flow, the trajectory must stay in the set of minimizers of~$f$, and
  hence $t \mapsto f(x(t))$ is constant.  Let $t'$ be the smallest
  time such that $ -\partial f(x^*) \ni v=\dot x(t') \neq
  0$. Using~\cite[Lemma~1]{AB-FC:99}, we have $ 0 = \frac{d}{dt}
  f(x(t)) = v^T \xi$, for all $\xi \in \partial f(x^*)$. In
  particular, for $\xi = -v$, we get $0 = - \TwoNorm{v}^2$, which is a
  contradiction.
\end{proof}

\end{document}